\let\OLDthebibliography\thebibliography
\renewcommand\thebibliography[1]{
  \OLDthebibliography{#1}
  \setlength{\parskip}{0pt}
  \setlength{\itemsep}{0pt plus 0.3ex}
}
\newtheorem{thm}{Theorem}[section]
\newtheorem{lemma}[thm]{Lemma}
\newtheorem{cor}[thm]{Corollary}
\theoremstyle{definition}
\theoremstyle{remark}
\numberwithin{equation}{section}
\newcommand{\mmod}[1]{\,\,\text{mod}\,\,#1}
\newcommand*\wrapletters[1]{\wr@pletters#1\@nil}
\def\wr@pletters#1#2\@nil{#1\allowbreak\if&#2&\else\wr@pletters#2\@nil\fi}
\def\alp{{\alpha}} 
\def\bet{{\beta}}  
\def\gam{{\gamma}} 
\def\del{{\delta}} \def\Del{{\Delta}}
\def\tet{{\theta}}  
\def\kap{{\kappa}}
\def\lam{{\lambda}} \def\Lam{{\Lambda}}
\def\sig{{\sigma}}
\def\om{{\omega}} \def \Om {{\Omega}}
\def\ome{{\omega}}  
\def\eps{\varepsilon}
\def\le{\leqslant} \def\ge{\geqslant}
\def\d{{\,{\rm d}}}
\def \sig{{\sigma}}
\def \bC {\mathbb C}
\def \bK {\mathbb K}
\def \bN {\mathbb N}
\def \bQ {\mathbb Q}
\def \bR {\mathbb R}
\def \bZ {\mathbb Z}
\def \ba {\mathbf a}
\def \bb {\mathbf b}
\def \bf {\mathbf f}
\def \bi {\mathbf i}
\def \bj {\mathbf j}
\def \bm {\mathbf m}
\def \bt {\mathbf t}
\def \bu {\mathbf u}
\def \bv {\mathbf v}
\def \bx {\mathbf x}
\def \by {\mathbf y}
\def \bz {\mathbf z}
\def \bzero {\mathbf 0}
\def \btau {\boldsymbol{\tau}}
\def \bmu {{\boldsymbol{\mu}}}
\def \bnu {{\boldsymbol{\nu}}}
\def \balp {\boldsymbol{\alp}}
\def \bbet {\boldsymbol{\beta}}
\def \bgam {\boldsymbol{\gam}}
\def \bxi {{\boldsymbol{\xi}}}
\def \bomega {\boldsymbol{\omega}}
\def \bome {\boldsymbol{\omega}}
\def \bom {\boldsymbol{\omega}}
\def \fm {\mathfrak m}
\def \ft {\mathfrak t}
\def \fM {\mathfrak M}
\def \fN {\mathfrak N}
\def \fQ {\mathfrak Q}
\def \fR {\mathfrak R}
\def \fU {\mathfrak U}
\def \cD {\mathcal D}
\def \cF {\mathcal F}
\def \cI {\mathcal I}
\def \cS {\mathcal S}
\def \cX {\mathcal X}
\def \rank {\mathrm{rank}}
\def \det {\mathrm{det}}
\def \sinc {\mathrm{sinc}}
\def \diag {\mathrm{diag}}
\def \dag {\dagger}
\def \diam {\diamond}
\begin{document}
\title[Birch's theorem with shifts]{Birch's theorem with shifts}
\author[Sam Chow]{Sam Chow}
\address{School of Mathematics, University of Bristol, University Walk, Clifton, Bristol BS8 1TW, United Kingdom}
\email{Sam.Chow@bristol.ac.uk}
\subjclass[2010]{11D75, 11E76}
\keywords{Diophantine inequalities, forms in many variables, inhomogeneous polynomials}
\thanks{}
\date{}
\begin{abstract} 
A famous result due to Birch (1961) provides an asymptotic formula for the number of integer points in an expanding box at which given rational forms of the same degree simultaneously vanish, subject to a geometric condition. We present the first inequalities analogue of Birch's theorem.
\end{abstract}
\maketitle

\section{Introduction}
\label{intro}

A famous result due to Birch \cite[Theorem 1]{Bir1962} provides an asymptotic formula for the number of integer points in an expanding box at which given rational forms of the same degree simultaneously vanish, subject to a geometric condition. This in particular implies the existence of a nontrivial solution to the system of homogeneous equations, providing that a nonsingular solution exists in every completion of the rationals. We present the following inequalities analogue of Birch's theorem.

\begin{thm} \label{MainThm}
Let $f_1, \ldots, f_R$ be rational forms of degree $d \ge 2$ in 
\begin{equation} \label{numvars}
n > \sig + R(R+1)(d-1)2^{d-1}
\end{equation}
variables, where $\sigma$ is the dimension of the affine variety cut out by the condition
\[
\rank(\nabla f_k)_{k=1}^R < R.
\]
Assume that the forms $(1,\ldots,1) \cdot \nabla f_k$ $(1 \le k \le R)$ are linearly independent. Let $\btau \in \bR^R$, and let $\eta$ be a positive real number. Let $\mu$ be an irrational real number, and write $\bmu = (\mu,\ldots,\mu) \in \bR^n$. Then the number \mbox{$N(P) = N_\bf(P;  \mu, \btau, \eta)$} of integer solutions $\bx \in [-P,P]^n$ to
\begin{equation*}
|f_k(\bx + \bmu) - \tau_k| < \eta \qquad (1 \le k \le R)
\end{equation*}
satisfies
\begin{equation} \label{asymp}
N(P) = (2\eta)^R c P^{n-Rd} + o(P^{n-Rd})
\end{equation}
as $P \to \infty$, where 
\begin{equation} \label{cdef}
c = c_\bf = \int_{\bR^R} \int_{[-1,1]^n} e(\bgam \cdot \bf(\bt)) \d \bt \d \bgam.
\end{equation}
If $\bf = \bzero$ has a nonsingular real solution then $c > 0$.
\end{thm}

The definition \eqref{cdef} of the singular integral $c$ is the one given by Birch \cite{Bir1962}. We can interpret $c$ as the real density of points on the variety $\bf = \bzero$; we defer an extended discussion until \S \ref{SingularIntegral}.

Theorem \ref{MainThm} implies that $\{ \bf(\bx + \bmu) : \bx \in \bZ^n \}$ is dense in $\bR^R$. The example with $R = 1$ and
\[
f_1(\bx) = (x_1 - x_2)^3 + \ldots + (x_{99} - x_{100})^3
\]
shows that some condition, such as the linear independence of the forms $(1,\ldots,1) \cdot \nabla f_k$ $(1 \le k \le R)$, is necessary in order for our statement to be true. Theorem \ref{MainThm} involves a `uniform' shift $\bmu = (\mu, \ldots, \mu) \in \bR^n$. From our method it is not clear how to handle an arbitrary shift $\bmu = (\mu_1, \ldots, \mu_n) \in \bR^n \setminus \bQ^n$, as in \cite{scs,wps}, since many more simultaneous rational approximations would then be necessary.

In Theorem \ref{MainThm}, we have used the `Birch singular locus' to control the degeneracy of the system $\bf$. An alternative approach involves Schmidt's $h$-invariant \cite[\S 1]{Sch1985}. Quoting Schmidt, the $h$-invariant of a form $F$ of degree $d \ge 2$ with rational coefficients is the least $h$ such that $F$ `splits into $h$ products', i.e. 
\[
F = A_1B_1 + \ldots + A_hB_h
\]
for some forms $A_i$, $B_i$ of positive degrees and rational coefficients. If $f_1, \ldots, f_R$ are forms in $n$ variables with rational coefficients and have the same degree $d \ge 2$, then the $h$-invariant of the system $\bf$ is the minimum $h$-invariant of any form in the rational pencil. Writing $h$ for the $h$-invariant of $\bf$, we note that $h \le n$. Define $\Phi(d)$ by $\Phi(2) = \Phi(3) = 1$, $\Phi(4) = 3$, $\Phi(5) = 13$ and
\[
\Phi(d) = \frac{d!}{(\log 2)^d} \qquad (d \ge 6).
\]

\begin{thm} \label{TheoremTwo}
We may replace the condition \eqref{numvars} in Theorem \ref{MainThm} by the hypothesis
\begin{equation} \label{hhyp}
\frac h{\Phi(d)} > R(R+1)(d-1)2^{d-1} + R(R-1)(d-1),
\end{equation}
and the same conclusions hold.
\end{thm}

Cognoscenti will recall that in Schmidt's work \cite{Sch1985} the $h$-invariant needs to be larger if one seeks to ensure positivity of the singular series. This is not necessary for us: there is no singular series, since the main term comes from a single major arc around $\bzero$.

Over its half century of fame, Birch's theorem has been an extremely popular result to improve and generalise. In fact it may be possible for one to incorporate into Theorem \ref{MainThm} a recent improvement in Birch's theorem due independently to Dietmann \cite{Die2014} and Schindler \cite{Sch2014}. Skinner \cite{Ski1997} generalised Birch's theorem to number fields, and Lee \cite{Lee2012} considered Birch's theorem in a function field setting. Other results related to Birch's theorem are too numerous to honestly describe in a confined space, but recent papers include those of Brandes, Browning, Dietmann, Heath-Brown and Prendiville \cite{Bra2014, BDHB2014, BHB2014, BP2014}.

The case where $R=1$ and $f_1$ is an indefinite quadratic form has been solved in five variables by Margulis and Mohammadi \cite{MM2011}, who generalised famous results due to G\"otze \cite{Goe2004}, Margulis \cite{Mar1989} and others; four variables suffice unless the signature is $(2,2)$, while three variables suffice to obtain a lower bound of the expected strength. This present paper is a sequel to \cite{scs, wps}. The author was initially motivated to study shifted forms by Marklof's papers \cite{Mar2002, Mar2003}, which dealt with shifted quadratic forms in relation to the Berry--Tabor conjecture from quantum chaos; see also \cite{Mar2002cor}.

To our knowledge, no author has previously considered inhomogeneous diophantine inequalities of degree three or higher without assuming any additive structure, although inhomogeneous cubic equations were investigated by Davenport and Lewis \cite{DL1964}. For previous results on additive inhomogeneous diophantine inequalities see \cite{scs, wps}, where the author built on work of Freeman \cite{Fre2003}, who applied important estimates due to Baker \cite{Bak1986}. Some of these ideas were used by Parsell to treat simultaneous diagonal inequalities in \cite{Par1, Par2, Par3}. For homogeneous diophantine inequalities without additive structure, there is Schmidt's general result \cite[Theorem 1]{Sch1980}, as well as improved treatments of the cubic scenario due to Pitman \cite{Pit1968} and then Freeman \cite{Fre2000}. The more specialised cases of split cubic forms and cubic forms involving a norm form have been studied by the author \cite{sf} and Harvey \cite{Har2011}, respectively.

We now outline our proof of the asymptotic formula \eqref{asymp} in Theorem \ref{MainThm}. Our main weapon is Freeman's variant \cite{Fre2002} of the Davenport--Heilbronn method \cite{DH1946}. We may assume that the coefficients of $f_1,\ldots,f_R$ are integer multiples of $d!$. Indeed, we may if necessary rescale $\bf, \btau, \eta$, and change variables in the outer integral of \eqref{cdef}. Our starting point is the Taylor expansion
\begin{equation} \label{Taylor}
f_k(\bx + \bmu) = f_k(\bx) + f_k(\bmu) + \sum_{j=1}^{d-1} \mu^{d-j} \sum_{|\bj|_1 = j} d_{k,\bj} \bx^\bj \qquad (1 \le k \le R)
\end{equation}
about $\bmu$, where for $\bj \in \bZ_{\ge 0}^n$ we write
\[
\bx^\bj = x_1^{j_1} \cdots x_n^{j_n}, \quad |\bj|_1 = j_1+\ldots+j_n, \quad \bj! = j_1! \cdots j_n!
\]
and 
\begin{equation} \label{partials}
d_{k,\bj} = \bj !^{-1} \partial^\bj f_k(1, \ldots, 1) \in \bZ \qquad (1 \le k \le R).
\end{equation}
Thus, we may regard our shifted forms as polynomials in $\bx$. Note that
\begin{equation} \label{fknote}
f_k(\bx) = \sum_{|\bj|_1 = d} d_{k,\bj} \bx^\bj \qquad (1 \le k \le R).
\end{equation}

The pertinent exponential sums are
\begin{equation*}
S(\balp) = \sum_{|\bx| \le P} e(\balp \cdot \bf (\bx + \bmu)) \qquad (\balp \in \bR^R).
\end{equation*}
From \eqref{Taylor} we see that the highest degree component of $f_k(\bx + \bmu)$ is precisely $f_k(\bx)$. We can therefore use Birch's argument \cite{Bir1962}, which is based on Weyl differencing and the geometry of $\bf$, to restrict consideration to a thin set of major arcs where $\balp$ is well approximated. 

Though the polynomials $f_k(\bx + \bmu)$ are of the particular shape \eqref{Taylor}, we shall also need some exponential sum bounds in a more general inhomogeneous context. There are
\begin{equation} \label{NjDef}
N_j := {j+n-1 \choose n-1}
\end{equation}
monomials of degree $j$ in $n$ variables, or in other words there are $N_j$ vectors $\bj \in \bZ_{\ge 0}^n$ such that $|\bj|_1 = j$. For $\balp \in \bR^R$ and 
\begin{equation} \label{omegaDiam}
\bomega_\diam = (\omega_\bj)_{1 \le |\bj|_1 \le d-1} \in \bR^{N_1 + \ldots + N_{d-1}},
\end{equation}
write
\begin{equation} \label{gdef}
g(\balp, \bomega_\diam) = \sum_{|\bx| \le P} e\Bigl( \balp \cdot \bf(\bx) + \sum_{1 \le |\bj|_1 \le d-1} \omega_\bj \bx^\bj \Bigr).
\end{equation}
Using \eqref{Taylor}, we shall view $S(\balp)$ as a special case of $g(\balp, \bome_\diam)$, up to multiplication by a constant of absolute value 1. Thanks to the early steps of our argument, this will allow us to focus on the situation in which
\begin{equation} \label{gbig}
|g(\balp, \bome_\diam)| \ge P^n H^{-1},
\end{equation}
where $H$ is at most a small power of $P$.

Let
\begin{equation} \label{Fdef}
F_{k,j}(\bx) = \sum_{|\bj|_1 = j} d_{k,\bj} \bx^\bj \qquad (1 \le k \le R, 1 \le j \le d),
\end{equation}
and note from \eqref{fknote} that
\begin{equation} \label{top}
F_{k,d} = f_k \qquad (1 \le k \le R).
\end{equation}
We now see from \eqref{Taylor} that
\begin{equation} \label{Taylor2}
f_k(\bx+\bmu) = f_k(\bmu) + \sum_{1 \le j \le d} \mu^{d-j} F_{k,j}(\bx) \qquad (1 \le k \le R)
\end{equation}
and
\[
\balp \cdot \bf(\bx+ \bmu) = \balp \cdot \bf(\bmu) +  \balp \cdot \bf(\bx) 
+ \sum_{k \le R}\alp_k \sum_{1 \le |\bj|_1 \le d-1} \mu^{d-|\bj|_1} d_{k,\bj} \bx^\bj.
\]
Thus, with \eqref{omegaDiam} and the specialisation
\begin{equation} \label{omegadef}
\omega_\bj := \sum_{k \le R} d_{k,\bj} \alp_k \mu^{d-|\bj|_1} \qquad (1 \le |\bj|_1 \le d),
\end{equation}
we have
\begin{equation} \label{Sg}
S(\balp) = e(\balp \cdot \bf(\bmu)) g(\balp, \bome_\diam).
\end{equation}

Throughout, we define $\om_\bj$ $(|\bj|_1 = d)$ in terms of $\balp$ by
\begin{equation} \label{omegatop}
\om_\bj = \sum_{k \le R} d_{k,\bj} \alp_k \qquad (|\bj|_1 = d).
\end{equation}
This is consistent with \eqref{omegadef}. Though $\bom_\diam$ does not depend on those $\om_\bj$ for which $|\bj|_1 = d$, it will be convenient to also consider them.

Ideally, we would like to have good rational approximations to $\alp_k \mu^{d-j}$ for all $k \in \{1,2,\ldots,R \}$ and all $j \in \{ 1,2,\ldots,d \}$. We could then use the procedure demonstrated in \cite[ch. 8]{Bro2009} to decompose $S(\balp)$ into archimedean and non-archimedean components. We are only able to achieve this ideal for $j \in \cS$, where $\cS$ is the set of $j \in \{1,2,\ldots,d\}$ such that $F_{1,j}, F_{2,j}, \ldots, F_{R,j}$ are linearly independent. For all $j \in \{1,2,\ldots, d\}$, we are nonetheless able to rationally approximate those linear combinations of $\alp_1 \mu^{d-j}, \ldots, \alp_R \mu^{d-j}$ that are needed at this stage of the argument, namely the $\ome_\bj$. 

These rational approximations are a nontrivial consequence of \eqref{gbig}. The key idea is to fix all but one of the variables, and to regard the summation thus obtained as a univariate exponential sum. We can then use the simultaneous approximation methods of Baker \cite{Bak1986}.

Finally, we use the irrationality of $\mu$ to obtain nontrivial cancellation on Davenport--Heilbronn minor arcs $\fm$ (this is where $|\balp|$ is of `intermediate' size). We need the information that $d, d-1 \in \cS$. These facts follow from our geometric assumptions. Indeed, to see that $d-1 \in \cS$ one may compare \eqref{Taylor2} to the Taylor expansion
\[
f_k(\bx + \bmu) = f_k(\bx) + f_k(\bmu) + \sum_{i=1}^{d-1} \mu^i \sum_{|\bi|_1 = i} \bi!^{-1} \partial^\bi f_k (\bx)
\]
about $\bx$, which shows that 
\[
F_{k,d-1} = (1,\ldots,1) \cdot \nabla f_k \qquad (1 \le k \le R).
\]
We thus have good rational approximations to $\balp$ and $\mu \balp$, and their strength may be used to contradict the irrationality of $\mu$ unless we have a nontrivial estimate on $\fm$.

The proof of Theorem \ref{TheoremTwo} is almost the same, with the only substantial change being a suitable analogue of Lemma \ref{Birch43}. It transpires that such an analogue can be deduced without much work from Schmidt's seminal paper \cite{Sch1985}. Further details shall be provided in \S \ref{SchmidtApproach}.

We organise thus. In \S \ref{BirchType}, we use Freeman's kernel functions to relate $N(P)$ to exponential sums; see \cite[\S 2]{Fre2002}. Using Birch's argument, we then obtain good simultaneous rational approximations to the $\alp_k$ ($1 \le k \le R$) in the case that $g(\balp, \bome_\diam)$ is `large'; see \cite[Lemma 4.3]{Bir1962}. In \S \ref{BakerType}, we simultaneously approximate the $\om_\bj$ ($1 \le |\bj|_1 \le d$). In \S \ref{special}, we use $S(\balp)$ to obtain simultaneous rational approximations to the $\alp_{k,j}$ ($1 \le k \le R, j \in \cS$). In \S \ref{classical}, we adapt classical bounds to the present context. In \S \ref{bgf}, we exploit the irrationality of $\mu$ by using a simplification of the methods of Bentkus, G\"otze and Freeman, similarly to \cite[\S 2]{Woo2003}. The lemmas therein motivate our precise Davenport--Heilbronn trisection, which we present in \S \ref{dh}. We then resolve the asymptotic formula \eqref{asymp}. We complete the proof of Theorem \ref{MainThm} in \S \ref{SingularIntegral} by establishing the final statement of the theorem. It is then that we provide Schmidt's interpretation \cite{Sch1982, Sch1985} of the singular integral $c$ as a real density. Finally, we prove Theorem \ref{TheoremTwo} in \S \ref{SchmidtApproach}.

We adopt the convention that $\eps$ denotes an arbitrarily small positive number, so its value may differ between instances. For $x \in \bR$ and $r \in \bN$, we put $e(x) = e^{2 \pi i x}$ and $e_r(x) = e^{2 \pi i x / r}$. Bold face will be used for vectors, for instance we shall abbreviate $(x_1,\ldots,x_n)$ to $\bx$, and define $|\bx| = \max(|x_1|, \ldots, |x_n|)$. For a vector $\bx$ of length $n$, and for $\bj \in \bZ_{\ge 0}^n$, we define $\bx^{\bj} = x_1^{j_1} \cdots x_n^{j_n}$, $|\bj|_1 = j_1 + \ldots + j_n$ and $\bj! = j_1! \cdots j_n!$. If $M$ is a matrix then we write $|M|$ for the maximum of the absolute values of its entries. We will use the unnormalised sinc function, given by $\sinc(x) = \sin(x)/x$ for $x \in \bR \setminus \{0\}$ and $\sinc(0) = 1$. 

We regard $\btau, \mu$ and $\eta$ as constants. The word \emph{large} shall mean in terms of $\bf, \eps$ and constants, together with any explicitly stated dependence. Similarly, the implicit constants in Vinogradov and Landau notation may depend on $\bf, \eps$ and constants, and any other dependence will be made explicit. The pronumeral $P$ denotes a large positive real number. The word \emph{small} will mean in terms of $\bf$ and constants. We sometimes use such language informally, for the sake of motivation; we make this distinction using quotation marks. 

The author thanks Trevor Wooley very much for his enthusiastic supervision, and for suggesting such an agreeable research programme. Special thanks go to Adam Morgan for an elegant proof of Lemma \ref{Adam}. Finally, thanks to the anonymous referees for doing a thorough job and making several helpful suggestions.

\section{Approximations of Birch type}
\label{BirchType}

We deploy the kernel functions introduced by Freeman \cite[\S 2.1]{Fre2002}; see also \cite[\S 2]{PW2014}. We shall define $T: [1, \infty) \to [1, \infty)$ in due course. For now, it suffices to note that 
\begin{equation} \label{Tbound}
T(P) \le P, 
\end{equation}
and that $T(P) \to \infty$ as $P \to \infty$. Put
\begin{equation} \label{Ldef}
L(P) = \max(1,\log T(P)), \qquad \rho = \eta L(P)^{-1}
\end{equation}
and
\begin{equation} \label{Kdef}
K_{\pm}(\alp) = \frac {\sin(\pi \alp \rho) \sin(\pi \alp(2 \eta \pm \rho))} {\pi^2 \alp^2 \rho}. 
\end{equation}
From \cite[Lemma 1]{Fre2002} and its proof, we have
\begin{equation} \label{Kbounds}
K_\pm(\alp) \ll \min(1, L(P) |\alp|^{-2})
\end{equation}
and
\begin{equation} \label{Ubounds}
0 \le \int_\bR e(\alp t) K_{-}(\alp)\d\alp \le U_\eta(t) \le \int_\bR e(\alp t) K_{+}(\alp)\d\alp \le 1,
\end{equation}
where
\begin{equation*}
U_\eta(t) = 
\begin{cases}
1, &\text{if } |t| < \eta \\
0, &\text{if } |t| \ge \eta.
\end{cases}
\end{equation*}

For $\balp \in \bR^R$, write
\begin{equation} \label{Kprod}
\bK_\pm(\balp) = \prod_{k \le R} K_\pm(\alp_k).
\end{equation}
The inequalities \eqref{Ubounds} give
\[
R_{-}(P)  \le N(P) \le R_+(P), 
\]
where 
\[ 
R_\pm(P) = \int_{\bR^R} S(\balp) e(-\balp \cdot \btau) \bK_\pm(\balp) \d \balp. 
\]
In order to prove \eqref{asymp}, it therefore remains to show that
\begin{equation} \label{goal1}
R_\pm(P) = (2 \eta)^R c P^{n-Rd} + o(P^{n-Rd})
\end{equation}
as $P \to \infty$, where $c$ is given by \eqref{cdef}. 

In this section we employ some classical bounds of Davenport \cite{Dav1959, Dav1962, Dav1963, Dav2005} and Birch \cite{Bir1962}; see also \cite[ch. 8]{Bro2009}. These results apply directly to Weyl sums associated to $\balp \cdot \bf$, and are proved by Weyl differencing down to degree one. As such, they are unaffected by the presence of terms of degree lower than $d$. The idea that lower order terms are irrelevant when establishing Weyl-type bounds is well known; Birch himself notes this in \cite[\S 2]{Bir1962}, and it was also used to prove \cite[Lemma 1]{DL1964}. From \eqref{gdef}, we see that the polynomial associated to the Weyl sum $g(\balp, \bom_\diam)$ has $\balp \cdot \bf$ as its highest degree component. Exploiting this, we may deduce these classical bounds for $g(\balp, \bome_\diam)$.

\begin{lemma} \label{Birch43}
Let $0 < \tet \le 1$. Suppose 
\begin{equation} \label{gbig0}
|g(\balp, \bome_\diam)| > P^{n- (n-\sig) \tet/ 2^{d-1} + \eps}.
\end{equation}
Then there exist integers $q, a_1, \ldots, a_R$ such that
\begin{equation} \label{qa}
1 \le q \le P^{R(d-1)\tet}, \qquad \gcd(a_1, \ldots, a_R, q) = 1
\end{equation}
and
\begin{equation} \label{FirstApprox}
2|q \balp - \ba| \le P^{R(d-1)\tet - d}. 
\end{equation}
In particular, if $|S(\balp)| > P^{n- (n-\sig) \tet/ 2^{d-1} + \eps}$ then there exist $q \in \bN$ and $\ba \in \bZ^R$ satisfying \eqref{qa} and \eqref{FirstApprox}. We may replace $g(\balp, \bome_\diam)$ by
\[
\sum_{1 \le x_1, \ldots, x_n \le P}  e\Bigl( \balp \cdot \bf(\bx) + \sum_{1 \le |\bj|_1 \le d-1} \omega_\bj \bx^\bj \Bigr),
\]
and the same conclusions hold. 
\end{lemma}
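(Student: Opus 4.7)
The plan is to reduce the statement to Birch's original \cite[Lemma 4.3]{Bir1962} by showing that the lower-order inhomogeneous phase $\sum_{1 \le |\bj|_1 \le d-1} \omega_\bj \bx^\bj$ contributes nothing to the Weyl differencing process. I would begin by writing out the $(d-1)$-fold Weyl differencing of $|g(\balp, \bome_\diam)|^{2^{d-1}}$: a standard application of Cauchy--Schwarz, repeated $d-1$ times, produces an upper bound of the shape
\[
|g(\balp, \bome_\diam)|^{2^{d-1}} \le (2P+1)^{(2^{d-1}-d)n} \sum_{|\bh_1|, \ldots, |\bh_{d-1}| \le 2P} \Bigl| \sum_{\bx \in \cI(\bh)} e\bigl(\Psi(\balp, \bome_\diam; \bh, \bx)\bigr) \Bigr|,
\]
for a suitable box $\cI(\bh) \subseteq [-P,P]^n$, where
\[
\Psi(\balp, \bome_\diam; \bh, \bx) = \Delta_{\bh_{d-1}} \cdots \Delta_{\bh_1}\Bigl( \balp \cdot \bf(\bx) + \sum_{1 \le |\bj|_1 \le d-1} \omega_\bj \bx^\bj \Bigr)
\]
and $\Delta_\bh F(\bx) = F(\bx + \bh) - F(\bx)$.

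The key observation is that applying $\Delta_\bh$ to a polynomial in $\bx$ strictly reduces the degree in $\bx$ by one. Since each term $\omega_\bj \bx^\bj$ in the lower-order phase has degree $|\bj|_1 \le d-1$, after $d-1$ difference operators each such monomial becomes a function of $\bh_1, \ldots, \bh_{d-1}$ alone, independent of $\bx$. That contribution therefore appears as a unit-modulus factor that can be pulled outside the inner sum over $\bx$ and discarded in absolute value. What remains of $\Psi$ is precisely the $(d-1)$-fold difference of $\balp \cdot \bf(\bx)$, which is a linear form in $\bx$ whose coefficients are, up to the combinatorial factor $d!$, the components of $\balp \cdot \Phi_\bf(\bh_1, \ldots, \bh_{d-1}, \cdot)$, where $\Phi_\bf$ is the symmetric multilinear form associated with $\bf$. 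Thus the right-hand side of the displayed inequality is majorised by the identical quantity that arises in Birch's treatment of $\sum_{|\bx| \le P} e(\balp \cdot \bf(\bx))$.

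At this point the argument of \cite[\S 3--4]{Bir1962} applies verbatim: the lower bound \eqref{gbig0} forces many of the multilinear expressions $\balp \cdot \Phi_\bf(\bh_1, \ldots, \bh_{d-1}, \be_i)$ to lie near integers, and the geometric hypothesis (dimension of the singular locus equal to $\sig$) allows Birch to convert this near-integrality into genuine simultaneous rational approximations to $\alp_1, \ldots, \alp_R$ of the quality asserted in \eqref{qa} and \eqref{FirstApprox}. The ``in particular'' clause about $S(\balp)$ is immediate from \eqref{Sg}, which shows that $|S(\balp)| = |g(\balp, \bome_\diam)|$ for the specialisation \eqref{omegadef}, so the hypothesis on $S(\balp)$ triggers the same conclusion. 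The final sentence, replacing the summation range $[-P,P]^n$ by $[1,P]^n$, is handled identically because the differencing argument is translation-invariant (replacing $\bx$ by $\bx - P\mathbf{e}$ merely shifts $\cI(\bh)$ and introduces yet another $\bx$-independent phase, which is again discarded).

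The only place requiring genuine care is the bookkeeping in the differencing step, namely verifying that after each of the $d-1$ applications of Cauchy--Schwarz the new ``lower-order'' phase picked up (depending on the accumulated $\bh_i$'s) remains of degree strictly less than the current degree in $\bx$, so that the induction on the number of differences really does clear all contributions from $\bome_\diam$ by the final step. This is a routine but slightly notationally heavy verification, and constitutes the main (mild) obstacle; once it is in place the classical geometric argument of Birch transfers mechanically.
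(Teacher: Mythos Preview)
Your proposal is correct and follows essentially the same approach as the paper: the paper's proof simply says to imitate Birch's proof of \cite[Lemma~4.3]{Bir1962}, having noted just before the lemma that the Weyl-differencing argument is unaffected by terms of degree lower than $d$, and then to invoke \eqref{Sg} for the claim about $S(\balp)$. Your write-up makes explicit precisely the mechanism the paper only alludes to (the $(d-1)$-fold differencing annihilates the $\bome_\diam$-phase up to a unimodular factor), and your treatment of the third assertion via translation invariance is in the same spirit as the paper's ``same proof as the first''.
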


\begin{proof}
For the first statement we may imitate Birch's proof of \cite[Lemma 4.3]{Bir1962}. We have removed the implied constant from \eqref{gbig0} by redefining $\eps$ and recalling that $P$ is large. Now \eqref{Sg} gives rise to our second claim. The third assertion follows in the same way as the first.
\end{proof}

Throughout, put
\begin{equation} \label{kapDef}
\kap = \frac{n-\sig}{R(d-1)2^{d-1}}.
\end{equation}
It follows from \eqref{numvars} that 
\begin{equation} \label{kapBound}
\kap > R+1.
\end{equation}
The argument of the corollary to \cite[Lemma 4.3]{Bir1962} now produces the following.

\begin{cor} \label{Birch43cor}
For $\balp \in \bR^R$ with $|\balp| < P^{-d/2}$, and for $\bomega_\diam$ as in \eqref{omegaDiam}, we have
\begin{equation} \label{first}
g(\balp, \bom_\diam) \ll P^{n+\eps} (P^d |\balp|)^{-\kap}.
\end{equation}
\end{cor}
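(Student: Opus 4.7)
The plan is to invoke Lemma \ref{Birch43} with a dissection parameter $\theta = \theta(\balp) \in (0,1]$ chosen as a function of $|\balp|$, so that its rational-approximation conclusion \eqref{qa}--\eqref{FirstApprox} becomes incompatible with the hypothesis $|\balp| < P^{-d/2}$; the desired bound on $g(\balp, \bom_\diam)$ then follows by contrapositive. This is exactly Birch's strategy for the corollary to \cite[Lemma 4.3]{Bir1962}. The trivial range $|\balp| \le 2P^{-d}$ can be disposed of at once, since there $(P^d|\balp|)^{-\kap}$ is bounded below by a positive constant and the trivial bound $|g(\balp, \bom_\diam)| \le (2P+1)^n \ll P^n$ already suffices.

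For $2P^{-d} < |\balp| < P^{-d/2}$, I would take $\theta$ defined by $P^{R(d-1)\theta} = \tfrac12 P^d|\balp|$; a routine check using $R \ge 1$, $d \ge 2$ and the permitted range of $|\balp|$ confirms $\theta \in (0,1]$ once $P$ is large. The identity $(n-\sig)/2^{d-1} = R(d-1)\kap$ coming from \eqref{kapDef} then shows that the threshold in Lemma \ref{Birch43} becomes
\[
P^{n-(n-\sig)\theta/2^{d-1}+\eps/2} = 2^{\kap} P^{n+\eps/2}(P^d|\balp|)^{-\kap},
\]
which for $P$ large falls below the target $P^{n+\eps}(P^d|\balp|)^{-\kap}$. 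Consequently, if the target bound were to fail, Lemma \ref{Birch43} would produce $q \in \bN$ and $\ba \in \bZ^R$ with $1 \le q \le \tfrac12 P^d|\balp|$, $(a_1,\ldots,a_R,q)=1$ and $|q\balp - \ba| \le |\balp|/4$. The possibility $\ba = \bzero$ is ruled out, since coprimality would force $q=1$ and hence $\balp = \bzero$, contradicting $|\balp| > 2P^{-d}$. Therefore some $|a_k| \ge 1$, giving $q|\balp| \ge 1 - |\balp|/4 \ge 3/4$ (using $|\balp| < 1$); combining with $q \le \tfrac12 P^d|\balp|$ yields $|\balp|^2 \ge \tfrac32 P^{-d}$, that is $|\balp| > P^{-d/2}$, contradicting the hypothesis.

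The only real subtlety is arranging the two inequalities on $q$ so that they collide at the last step: the factor $\tfrac12$ in the definition of $\theta$ is inserted precisely to produce the required slack, and any other constant less than $1$ would do just as well. The lower-order terms encoded by $\bom_\diam$ play no role here, since Lemma \ref{Birch43} already handles them via Weyl differencing before any Birch-type approximation is extracted.
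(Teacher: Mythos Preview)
Your proposal is correct and follows precisely the approach that the paper invokes, namely Birch's own argument for the corollary to \cite[Lemma~4.3]{Bir1962}; the paper gives no independent proof but simply refers to that argument. Your choice of $\theta$ via $P^{R(d-1)\theta}=\tfrac12 P^d|\balp|$, the contrapositive application of Lemma~\ref{Birch43}, and the derivation of the contradiction $|\balp|>P^{-d/2}$ from the resulting approximation are all carried out cleanly.
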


Fix a small positive real number $\tet_0$. Let $\fN$ be the set of $\balp \in \bR^R$ satisfying \eqref{qa} and \eqref{FirstApprox} with $\tet = \tet_0$, for some integers $q, a_1,\ldots,a_R$. Given $\balp \in \fN$, such integers would be unique. Indeed, if we also had 
\[
1 \le t \le P^{R(d-1)\tet_0}, \qquad (b_1, \ldots, b_R,t) = 1
\]
and
\[
2|t \balp - \bb| \le P^{R(d-1)\tet_0 - d}
\]
for some integers $t,b_1, \ldots, b_R$, then 
\begin{align*}
|q^{-1}\ba - t^{-1}\bb| &\le |\balp - q^{-1}\ba| + |\balp - t^{-1}\bb|  \\
&< (1/q+1/t) P^{R(d-1)\tet_0 -d} < (qt)^{-1};
\end{align*}
this would imply that $t^{-1} \bb= q^{-1} \ba$, and hence that $t=q$ and $\bb = \ba$.

Let $\fU$ be an arbitrary unit hypercube in $R$ dimensions. Using Lemma \ref{Birch43}, the argument of \cite[Lemma 4.4]{Bir1962} shows that
\begin{equation} \label{BasicMinorBound}
\int_{(\bR^R \setminus \fN) \cap \fU} |S(\balp)| \d \balp \ll P^{n - Rd - \eps}.
\end{equation}
Put
\begin{equation} \label{NstarDef}
\fN^* = \fN^*_P = \{ \balp \in \fN: |S(\balp)| > P^{n - R(R+1) d \tet_0} \}.
\end{equation}
The measure of $\fN \cap \fU$ is $O(P^{R(R+1)(d-1)\tet_0 - Rd})$, so
\[
\int_{(\fN \setminus \fN^*) \cap \fU} |S(\balp)| \d \balp \ll P^{n- Rd - R(R+1) \tet_0}.
\]
Combining this with \eqref{BasicMinorBound} yields
\[
\int_{(\bR^R \setminus \fN^*) \cap \fU} |S(\balp)| \d \balp \ll P^{n-Rd-\eps}.
\]
Now \eqref{Tbound}, \eqref{Ldef}, \eqref{Kbounds} and \eqref{Kprod} give
\[
\int_{\bR^R \setminus \fN^*} |S(\balp) \bK_\pm(\balp)| \d \balp \ll L(P)^R P^{n-Rd-\eps} = o(P^{n-Rd}).
\]
In view of the discussion surrounding \eqref{goal1}, it remains to show that
\begin{equation} \label{goal2}
\int_{\fN^*} S(\balp) e(-\balp \cdot \btau) \bK_\pm(\balp) \d \balp = (2 \eta)^R c P^{n-Rd} + o(P^{n-Rd})
\end{equation}
as $P \to \infty$, with $c$ as in \eqref{cdef}.

\section{Approximations of Baker type}
\label{BakerType}

By \eqref{gdef}, \eqref{Fdef}, \eqref{top} and \eqref{omegatop}, we have
\begin{equation} \label{omg}
g(\balp, \bom_\diam) = \sum_{|\bx| \le P} e \Bigl( \sum_{1 \le |\bj|_1 \le d} \ome_\bj \bx^\bj \Bigr).
\end{equation}
In the case that $g(\balp, \bome_\diam)$ is `large', we shall use \cite[Theorem 5.1]{Bak1986} to obtain simultaneous rational approximations to the $\omega_\bj$. The idea is to fix $x_2, \ldots, x_n$, so as to consider $\sum \ome_\bj \bx^\bj$ as a polynomial in $x_1$. If we simply do this, we are only able to approximate certain linear combinations of the $\om_\bj$, and we do not acquire enough information. However, if we first  change variables, then we can approximate different linear combinations of the $\om_\bj$. The point is to use several carefully selected changes of variables. We never actually make these changes of variables; we merely incorporate them into our summations.

Suppose we were to put $\bx = \by + x_1 \bm$, regarding $m_1 = 1, m_2, \ldots, m_n \in \bN$ and $y_1 = 0$ as being fixed. For some $\by$, will be able to simultaneously approximate the coefficients of the $x_1^j$ in $\sum_\bj \ome_\bj (\by + x_1 \bm)^\bj$. By the binomial theorem, the coefficient of $x_1^j$ in $(\by+x_1 \bm)^\bi$ is
\begin{equation} \label{zdef}
z_{j,\bi} = z_{j,\bi}(\bm, \by) := \sum_{\bj \le \bi: |\bj|_1 = j} {\bi \choose \bj} \bm^\bj \by^{\bi - \bj},
\end{equation}
where 
\[
{\bi \choose \bj} = \prod_{v \le n} {i_v \choose j_v},
\]
and $\bj \le \bi$ means that $j_v \le i_v$ ($1 \le v \le n$). Hence, the coefficient of $x_1^j$ in $\sum_\bj \ome_\bj (\by+x_1 \bm)^\bj$ is 
\begin{equation} \label{coeff}
 \sum_{|\bj|_1 = j} \bm^\bj \omega_\bj + 
\sum_{j < |\bi|_1 \le d} z_{j,\bi} \omega_\bi.
\end{equation}

Since we wish to approximate the $\omega_\bj$, the first sum in \eqref{coeff} motivates the need for our next lemma. Recall \eqref{NjDef}.

\begin{lemma} \label{Adam}
There exist $\bm_1, \ldots, \bm_{N_d} \in \bN^n$ such that the first entry of $\bm_t$ is $1$ $(1 \le t \le N_d)$ and the square matrices
\begin{equation} \label{MjDef}
M_j = (\bm_t^\bj)_{1 \le t \le N_j, |\bj|_1 = j} \qquad (1 \le j \le d)
\end{equation}
are invertible over $\bQ$.
\end{lemma}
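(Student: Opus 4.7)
The plan is to exhibit explicit vectors and reduce to a generalised Vandermonde determinant, which is handled by Descartes' rule of signs. For $s = 1, 2, \ldots, N_d$, I would take
\[
\bm_s = (1, s, s^{d+1}, s^{(d+1)^2}, \ldots, s^{(d+1)^{n-2}}) \in \bN^n.
\]
Each such vector has first coordinate $1$, as required. For a multiindex $\bj$ with $|\bj|_1 = j$, the first component contributes $1^{j_1} = 1$, and a direct computation gives $\bm_s^\bj = s^{E(\bj)}$ with
\[
E(\bj) = \sum_{v=2}^n j_v (d+1)^{v-2}.
\]
Since $|\bj|_1 = j \le d$, each component $j_v$ lies in $\{0, 1, \ldots, d\}$, so $E(\bj)$ is a legitimate base-$(d+1)$ expansion. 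By uniqueness of this expansion, the map $\bj \mapsto E(\bj)$ is injective on multiindices with $|\bj|_1 = j$.

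Now fix $1 \le j \le d$ and suppose, for a contradiction, that $\det M_j = 0$. Then there exist coefficients $c_\bj \in \bR$, indexed by multiindices with $|\bj|_1 = j$ and not all zero, such that $\sum_\bj c_\bj s^{E(\bj)} = 0$ for each $s \in \{1, 2, \ldots, N_j\}$. The real polynomial $p(x) = \sum_\bj c_\bj x^{E(\bj)}$ therefore has the $N_j$ distinct positive real roots $1, 2, \ldots, N_j$. On the other hand, since the exponents $E(\bj)$ are pairwise distinct, $p$ has at most $N_j$ monomials, and Descartes' rule of signs implies that a nonzero real polynomial with $k$ nonzero terms has at most $k - 1$ positive real roots. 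Hence $p$ has at most $N_j - 1$ positive real roots, a contradiction. We conclude that $\det M_j \ne 0$, so $M_j$ is invertible over $\bQ$ for every $1 \le j \le d$.

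I expect no significant obstacle: the only genuine content is recognising that the natural choice of $\bm_s$ along a moment curve with exponents spaced by powers of $d+1$ separates the monomials of each degree, after which Descartes' rule (or equivalently, the classical non-vanishing of generalised Vandermonde determinants with positive bases and distinct nonnegative exponents) closes the argument.
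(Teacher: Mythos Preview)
Your proof is correct. Both your argument and the paper's hinge on the same base-$(d+1)$ encoding $E(\bj)=\sum_{v=2}^n j_v(d+1)^{v-2}$ to separate the monomials of each fixed degree, but the sampling and the final determinant argument differ.

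The paper takes $\bm_t=(\nu_1^{t-1},\ldots,\nu_n^{t-1})$ with $\nu_1=1$ and $\nu_s=2^{(d+1)^{s-2}}$, i.e.\ it samples the \emph{same} weighted moment curve $s\mapsto(s^{e_1},\ldots,s^{e_n})$ as you do, but at the geometric nodes $s=2^{t-1}$ rather than your arithmetic nodes $s=t$. This yields $\bm_t^{\bj}=(2^{E(\bj)})^{t-1}$, so $M_j$ is a \emph{standard} Vandermonde matrix in the parameters $2^{E(\bj)}$, and invertibility follows immediately from the injectivity of $E$. Your choice gives $\bm_s^{\bj}=s^{E(\bj)}$, a \emph{generalised} Vandermonde matrix, and you then need Descartes' rule of signs (equivalently, the classical positivity of generalised Vandermonde determinants) to conclude. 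The paper's route is marginally more self-contained, since it avoids invoking Descartes; on the other hand your construction uses smaller vectors, which is irrelevant for the lemma but could matter if one ever wanted explicit bounds downstream.
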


\begin{proof} Put
\[
\bm_t = (\nu_1^{t-1}, \ldots, \nu_n^{t-1}) \qquad (1 \le t \le N_d)
\]
with $\nu_1 = 1$ and $\nu_s = 2^{(d+1)^{s-2}}$ ($2 \le s \le n$).  Let $j \in \{1,2,\ldots,d\}$, and note that the order of the vectors $\bj$ does not affect whether or not the matrix is invertible. We have
\[
\bm_t^\bj = (\bnu^\bj)^{t-1} \qquad (1 \le t \le N_j, |\bj|_1 = j),
\]
so $M_j$ is a square Vandermonde matrix with parameters $\bnu^\bj$ ($|\bj|_1 = j$), and it remains to show that if $|\bi|_1 = |\bj|_1 = j$ and $\bi \ne \bj$ then $\bnu^\bi \ne \bnu^\bj$. We may assume that $\bi > \bj$ in reverse lexicographic order, so that there exists $r \in \{2,3,\ldots,n\}$ such that $i_r > j_r$ and $i_s = j_s$ ($r+1 \le s \le n$). Now
\[
\bnu^\bi / \bnu^\bj \ge \nu_r / \nu_{r-1}^j > 1.
\]
\end{proof}

Henceforth, we let $\bm_1, \ldots, \bm_{N_d}$ be fixed vectors as in Lemma \ref{Adam}. Baker's work \cite[Theorem 5.1]{Bak1986} shows that if a Weyl sum in one variable is `large' then its non-constant coefficients admit good simultaneous rational approximations. There is currently no close analogue in many variables. However, since we have already restricted attention to a thin set of major arcs, we obtain a satisfactory analogue by fixing all but one variable and then using Baker's result. For the time being, we work with the more general Weyl sum $g(\balp, \bom_\diam)$. Put
\begin{equation} \label{Ndef}
N = N_1 + \ldots +  N_d.
\end{equation}

\begin{lemma} \label{bak}
Let $H > 0$ be such that 
\begin{equation} \label{Hlimit}
H^{2^d N + 1} \le P,
\end{equation}
and assume \eqref{gbig}. Then there exist unique $r \in \bN$ and 
\[
\ba_\dagger = (a_{\bj})_{1 \le |\bj|_1 \le d} \in \bZ^N
\]
such that
\begin{equation} \label{bak1}
r \ll H^{Nd} P^\eps, \qquad \gcd(r,\ba_\dagger) = 1
\end{equation}
and
\begin{equation} \label{bak2}
r \omega_\bj - a_\bj \ll H^{Nd} P^{\eps - |\bj|_1} \qquad 
(\bj \in \bZ_{\ge 0}^n: 1 \le |\bj|_1 \le d),
\end{equation}
where $\gcd(r,\ba_\dagger)$ denotes the greatest common divisor of $r$ and the entries of $\ba_\dagger$. 
\end{lemma}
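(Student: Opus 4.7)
The plan is to extract from $g(\balp, \bom_\diam)$, for each of the $N_d$ direction vectors $\bm_t$ supplied by Lemma \ref{Adam}, a univariate polynomial Weyl sum in $x_1$, apply Baker's Theorem 5.1 of \cite{Bak1986} to each, and then use the invertibility of the matrices $M_j$ in descending order of $j$ to recover simultaneous rational approximations to the individual $\omega_\bj$.

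Fix $t \in \{1, \ldots, N_d\}$. Since the first entry of $\bm_t$ equals $1$, the map $(x_1, y_2, \ldots, y_n) \mapsto \bx = \by + x_1 \bm_t$ with $y_1 := 0$ is a bijection on $\bZ^n$, carrying $[-P,P]^n \cap \bZ^n$ to a region in which $\by' := (y_2, \ldots, y_n)$ takes $O(P^{n-1})$ distinct values. Reindexing \eqref{omg} and swapping the order of summation yields $g(\balp, \bom_\diam) = \sum_{\by'} \sum_{x_1} e(h^{(t)}_\by(x_1))$, where $h^{(t)}_\by$ is a polynomial in $x_1$ of degree at most $d$ whose coefficient of $x_1^j$ equals $c^{(t)}_j(\by) := \sum_{|\bj|_1 = j} \bm_t^{\bj} \omega_\bj + \sum_{j < |\bi|_1 \le d} z_{j,\bi}(\bm_t, \by)\, \omega_\bi$, in the notation of \eqref{coeff} and \eqref{zdef}. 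The assumption \eqref{gbig} and the pigeonhole principle then furnish a vector $\by^{(t)}$ with $|\by^{(t)}| \ll P$ for which the inner sum has magnitude $\gg P H^{-1}$, the range in $x_1$ being an interval of length $O(P)$.

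Baker's Theorem 5.1, whose range hypothesis is arranged by \eqref{Hlimit}, then supplies a positive integer $r_t$ and integers $b_{t,1}, \ldots, b_{t,d}$, jointly coprime, with $r_t \ll H^{O(1)} P^\eps$ and $|r_t c^{(t)}_j(\by^{(t)}) - b_{t,j}| \ll H^{O(1)} P^{\eps - j}$ for $1 \le j \le d$. Set $r_0 := \lcm(r_1, \ldots, r_{N_d})$, so that $r_0 \ll H^{O(N_d)} P^\eps$. I recover the individual $\omega_\bj$ by descending induction on $j$. The base case $j = d$ uses that $c^{(t)}_d$ is independent of $\by$ and equals $\sum_{|\bj|_1 = d} \bm_t^{\bj} \omega_\bj$; the $N_d \times N_d$ system $(c^{(t)}_d)_{t=1}^{N_d} = M_d (\omega_\bj)_{|\bj|_1 = d}$ is invertible over $\bQ$ by Lemma \ref{Adam}, and multiplying through by the integer adjugate of $M_d$ and absorbing $\det M_d$ into the working denominator yields integer approximations to $|\det M_d| \cdot r_0 \omega_\bj$ with error $\ll H^{O(N_d)} P^{\eps - d}$. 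For each smaller $j$, I use the inductive approximations at higher $|\bi|_1$ together with the bound $|z_{j, \bi}(\bm_t, \by^{(t)})| \ll P^{|\bi|_1 - j}$ (from $|\by^{(t)}| \ll P$) to subtract off the known tail of $c^{(t)}_j(\by^{(t)})$ with controlled error, then invoke invertibility of $M_j$ to extract level-$j$ approximations. Each such stage multiplies the working denominator by $|\det M_j| = O(1)$, so after $d$ iterations I obtain a single $r \ll H^{Nd} P^\eps$ and an integer vector $\ba_\dagger$ satisfying \eqref{bak2}; dividing out any common factor of $(r, \ba_\dagger)$ restores \eqref{bak1}. Uniqueness follows by the standard argument that for two admissible pairs the integer $r' a_\bj - r a_\bj'$ has magnitude $\ll H^{2Nd} P^{\eps - |\bj|_1}$, which for $|\bj|_1 = d$ and $P$ large must vanish; equal ratios and coprimality then force equal pairs.

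The main obstacle is the bookkeeping needed to propagate the Baker errors cleanly through the descending induction. The essential point is that the growth $P^{|\bi|_1 - j}$ in the bound on $|z_{j,\bi}|$ exactly cancels the extra savings $P^{-|\bi|_1}$ already built into the approximations to $\omega_\bi$ for $|\bi|_1 > j$, so that the cumulative error at level $j$ stays of order $H^{O(Nd)} P^{\eps - j}$ while the cumulative denominator stays within $H^{Nd} P^\eps$, matching \eqref{bak1} and \eqref{bak2}.
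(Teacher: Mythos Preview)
Your approach is essentially the same as the paper's: extract $N_d$ univariate Weyl sums via the directions $\bm_t$, apply Baker's theorem to each, and then run a descending induction on $j$ using the invertibility of the $M_j$, with the cancellation $|z_{j,\bi}| \ll P^{|\bi|_1 - j}$ against the saved power $P^{-|\bi|_1}$ being exactly the mechanism the paper exploits. One correction to your uniqueness argument: showing $r'a_\bj = r a'_\bj$ only for $|\bj|_1 = d$ does not suffice, since the coprimality condition $\gcd(r,\ba_\dagger)=1$ involves all entries of $\ba_\dagger$; you must force vanishing for every $\bj$, and the binding case is $|\bj|_1 = 1$, where the required bound $H^{2Nd} P^{\eps - 1} < 1$ still follows from \eqref{Hlimit} because $2Nd \le 2^dN < 2^dN+1$ for $d \ge 2$.
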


\begin{proof}
Let $t \in \{1,2,\ldots, N_d\}$, and set $y_1 = 0$. By \eqref{omg}, we have
\[
g(\balp, \bom_\diam) = \sum_{\substack{y_2, \ldots, y_n: \\ |\by| \le (|\bm_t|+1) P}} 
\sum_{x_1 \in I_t(\by)} e \Bigl( \sum_{1 \le |\bj|_1 \le d} \om_\bj (\by + x_1 \bm_t)^\bj \Bigr),
\]
where
\[
I_t(\by) = \{ x_1 \in \bZ : |\by + x_1 \bm_t| \le P \}
\]
is a discrete subinterval of $[-P,P] \cap \bZ$. More precisely, given $t$ and $\by$ as above, there exists a real subinterval $[a,b]$ of $[-P,P]$ such that $I_t(\by) = [a,b] \cap \bZ$. By \eqref{gbig} and the triangle inequality, there exists $\by_t \in \bZ^n$ such that $|\by_t| \ll P$ and
\[
\Bigl | \sum_{x_1 \in I_t(\by_t)} e \Bigl( \sum_{1 \le |\bj|_1 \le d} \om_\bj (\by_t + x_1 \bm_t)^\bj \Bigr)  \Bigr | \gg PH^{-1}.
\]
Now \cite[Theorem 5.1]{Bak1986} and the calculation \eqref{coeff} imply the existence of integers $q_t, v_{t,d}, \ldots, v_{t,1}$ such that
\[
0 < q_t \ll H^d P^\eps
\]
and
\begin{equation} \label{q0errors}
q_t \Bigl(
 \sum_{|\bj|_1 = j} \bm_t^\bj \omega_\bj + 
 \sum_{j < |\bi|_1 \le d} z_{j,\bi,t} \omega_\bi
\Bigr) - v_{t,j} \ll H^d P^{\eps-j} \quad (1 \le j \le d),
\end{equation}
where 
\[
z_{j,\bi,t} = z_{j,\bi}(\bm_t, \by_t).
\]

With \eqref{MjDef}, put
\begin{equation} \label{DeljDef}
\Del_j = \det(M_j) \qquad (1 \le j \le d).
\end{equation}
In order for this to be well defined, we need to fix an ordering of the $\bj$ ($|\bj|_1 = j$), and we can do this by writing $\{ \bj_{1,j}, \ldots, \bj_{N_j,j} \}$ for the set of $\bj \in \bZ_{\ge 0}^n$ such that $|\bj|_1 = j$. Explicitly, we now have
\begin{equation} \label{MjExplicit}
M_j = \begin{pmatrix}
\bm_1^{\bj_{1,j}} & \ldots & \bm_1^{\bj_{N_j,j}} \\
\vdots && \vdots \\
\bm_{N_j}^{\bj_{1,j}} & \ldots & \bm_{N_j}^{\bj_{N_j,j}} 
\end{pmatrix} 
\qquad (1 \le j \le d).
\end{equation}
Note that the matrices $\Del_j M_j^{-1}$ have integer entries. For $j=1,2,\ldots,d$, write
\[
\Omega_j = \begin{pmatrix}
\omega_{\bj_{1,j}} \\
\vdots \\
\omega_{\bj_{N_j,j}}
\end{pmatrix},
\qquad
V_j = \begin{pmatrix}
v_{1,j} \\
\vdots \\
v_{N_j,j}
\end{pmatrix},
\]
and also let $\fQ_j = \diag(q_1, \ldots, q_{N_j})$. Let
\[
Q_j = q_1 \cdots q_{N_j} \qquad (1 \le j \le d)
\]
and
\[
\xi_j = \prod_{i=j}^d \Del_i Q_i \qquad (1 \le j \le d).
\]
For $j=1,2,\ldots,d$, put
\[
\psi_{t,j} = \sum_{j < |\bi|_1 \le d} z_{j,\bi,t} \omega_\bi,
\qquad
\Psi_j = \begin{pmatrix}
\psi_{1,j} \\
\vdots \\
\psi_{N_j,j}
\end{pmatrix}.
\]

We proceed, by induction on $|\bi|_1$ from $d$ down to $1$, to show that there exist integers $v_\bi$ ($1 \le |\bi|_1 \le d$) such that
\begin{equation} \label{InductiveOutcome}
\xi_{|\bi|_1} \omega_\bi - v_\bi \ll (H^d P^\eps)^{N_{|\bi|_1} + \ldots + N_d} P^{-|\bi|_1}.
\end{equation}
From \eqref{q0errors}, we have
\[
|\fQ_d M_d \Omega_d - V_d| \ll H^d P^{\eps-d}.
\]
Left multiplication by the integer matrix
\[
\Del_d Q_d M_d^{-1} \fQ_d^{-1} = (\Del_d  M_d^{-1}) \cdot (Q_d \fQ_d^{-1})
\]
gives
\[
|\Del_d Q_d \Omega_d - \Del_d Q_d M_d^{-1} \fQ_d^{-1} V_d| \ll (H^d P^\eps)^{N_d} P^{-d},
\]
since $q_t \ll H^d P^\eps$ ($1 \le t \le N_d$). In particular, there exist $v_\bj \in \bZ$ ($|\bj|_1 = d$) such that 
\begin{equation*}
\Delta_d Q_d \omega_\bj - v_\bj \ll (H^d P^\eps)^{N_d} P^{-d} \qquad (|\bj|_1 = d).
\end{equation*}
We have confirmed \eqref{InductiveOutcome} whenever $|\bi|_1 = d$.

Next let $j \in \{1, 2, \ldots, d-1 \}$, and suppose that for $i \in \{j+1, j+2, \ldots, d\}$ there exist $v_\bi \in \bZ$ ($|\bi|_1 = i$) satisfying \eqref{InductiveOutcome}. Put
\[
Z_j = \begin{pmatrix}
z_{1,j} \\
\vdots \\
z_{N_j,j}
\end{pmatrix},
\]
where for $1 \le t \le N_j$ we write
\[
z_{t,j} = \sum_{i = j+1}^d \Del_{j+1} \cdots \Del_{i-1} Q_{j+1} \cdots Q_{i-1} \sum_{|\bi|_1 = i} z_{j,\bi,t} v_\bi.
\]
From \eqref{q0errors}, we see that
\[
|\fQ_j  (M_j \Omega_j + \Psi_j) - V_j| \ll H^d P^{\eps-j}.
\]
Noting that 
\[
|\Del_jQ_j M_j^{-1} \fQ_j^{-1}| = |(\Del_j M_j^{-1}) \cdot (Q_j \fQ_j^{-1})| \ll (H^d P^\eps)^{N_j-1},
\]
we now have
\[
|\Del_j Q_j \Omega_j - \Del_jQ_j M_j^{-1} \fQ_j^{-1} (V_j - \fQ_j \Psi_j)| \ll (H^d P^\eps)^{N_j} P^{-j}.
\]
Hence
\begin{align} \notag 
\xi_j \Omega_j
&=
\xi_{j+1} \Del_jQ_j M_j^{-1} \fQ_j^{-1} (V_j - \fQ_j \Psi_j) 
+ O((H^d P^\eps)^{N_j + \ldots + N_d} P^{-j}) \\
\label{MainCalc} &= X_j  - \Del_j Q_j M_j^{-1} \xi_{j+1} \Psi_j
+ O((H^d P^\eps)^{N_j + \ldots + N_d} P^{-j}),
\end{align}
where $X_j = \xi_{j+1} \Del_jQ_j M_j^{-1} \fQ_j^{-1} V_j$ has integer entries and we have used Landau's notation entry-wise. By our inductive hypothesis and the bound 
\[
z_{j,\bi,t} \ll P^{|\bi|_1-j},
\]
we have
\begin{equation} \label{NiceApprox}
\xi_{j+1} \Psi_j = 
Z_j 
+ O((H^d P^\eps)^{N_{j+1} + \ldots + N_d} P^{-j}).
\end{equation}
Substituting \eqref{NiceApprox} into \eqref{MainCalc} yields
\[ 
\xi_j \Omega_j = X_j - \Del_j Q_j M_j^{-1}Z_j
+ O((H^d P^\eps)^{N_j + \ldots + N_d} P^{-j}).
\]
In particular, there exist $v_\bj \in \bZ$ ($|\bj|_1 = j$) such that 
\[
\xi_j \omega_\bj - v_\bj \ll (H^d P^\eps)^{N_j + \ldots + N_d} P^{-j} \qquad (|\bj|_1 = j).
\]

The induction has shown that there exist integers $v_\bi$ ($1 \le |\bi|_1 \le d$) satisfying \eqref{InductiveOutcome}. Our existence statement follows by redefining $\eps$, and choosing $r, a_\bj$ ($1 \le |\bj|_1 \le d$) by rescaling the integers $\xi_1, (\xi_1 / \xi_{|\bj|_1}) v_\bj$ in such a way that $r > 0$ and $\gcd(r, \ba_\dagger) = 1$.

Next suppose \eqref{bak1} and \eqref{bak2} also hold with $s \in \bN$ and 
\[
\bb_\dagger =  (b_\bj)_ {1 \le |\bj|_1 \le d} \in \bZ^N
\]
in place of $r$ and $\ba_\dagger$. Then, by the triangle inequality, we have
\[
|a_\bj /r - b_\bj/s| \ll (1/r + 1/s) H^{Nd} P^{\eps - 1} \qquad (1 \le |\bj|_1 \le d).
\]
Since $P$ is large and $r,s \ll H^{Nd} P^\eps$, we may now recall \eqref{Hlimit} to see that
\[
|a_\bj/r - b_\bj/s| < (rs)^{-1} \qquad (1 \le |\bj|_1 \le d). 
\]
Hence $a_\bj/r = b_\bj/s$ ($1 \le |\bj|_1 \le d$). The conditions
\[
\gcd(r,\ba_\dagger) = \gcd(s, \bb_\dagger) = 1
\]
now imply that $(r,\ba_\dagger) = (s, \bb_\dagger)$. We have demonstrated uniqueness.
\end{proof}

It may be possible to obtain the inequalities \eqref{bak1} and \eqref{bak2} with a smaller power of $H$, but we do not require this. Using an argument similar to that of the corollary to \cite[Lemma 4.3]{Bir1962}, we now deduce the following estimate for $g(\balp, \bom_\diam)$.

\begin{cor} \label{BakCor0}
Let $\xi$ be a small positive real number. Let $\balp \in \bR^R$, and let 
\[
\bomega_\diam = (\omega_\bj)_{1 \le |\bj|_1 \le d-1} \in \bR^{N_1 + \ldots + N_{d-1}}
\]
be such that
\[
P^{|\bj|_1}|\omega_\bj| \le (P^{\xi + (2^dN + 1)^{-1}})^{Nd} \qquad (1 \le |\bj|_1 \le d-1).
\]
Then
\begin{equation} \label{second}
g(\balp, \bom_\diam) \ll_\xi P^{n+\xi} \Bigl( \max_{1 \le |\bj|_1 \le d-1} P^{|\bj|_1} |\omega_\bj| \Bigr)^{-(Nd)^{-1}}.
\end{equation}
\end{cor}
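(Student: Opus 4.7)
\textbf{Plan for the proof of Corollary \ref{BakCor0}.} My plan is to argue by contradiction, in direct analogy with the deduction of Corollary \ref{Birch43cor} from Lemma \ref{Birch43}. Write $M = \max_{1 \le |\bj|_1 \le d-1} P^{|\bj|_1} |\omega_\bj|$; we may assume $M > 0$, as otherwise the claimed bound is vacuous. Suppose for contradiction that $|g(\balp,\bom_\diam)| > C P^{n+\xi} M^{-1/(Nd)}$, where $C$ is a sufficiently large constant depending only on $\xi$ (and the polynomials). Define $H$ by the equation $|g(\balp,\bom_\diam)| = P^n H^{-1}$, so that $H < C^{-1} P^{-\xi} M^{1/(Nd)}$, and in particular \eqref{gbig} holds with equality.

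The first task is to verify the hypothesis \eqref{Hlimit} so that Lemma \ref{bak} is applicable. The bound on $M$ in the hypothesis of the corollary was precisely designed for this: one computes
\[
H^{2^d N + 1} < C^{-(2^d N+1)} P^{-(2^d N+1)\xi} M^{(2^d N+1)/(Nd)} \le C^{-(2^d N+1)} P,
\]
using $M \le P^{Nd\xi + Nd/(2^d N+1)}$, so \eqref{Hlimit} holds once $C \ge 1$. Applying Lemma \ref{bak}, we obtain $r \in \bN$ and integers $a_\bj$ $(1 \le |\bj|_1 \le d)$ with $r \ll H^{Nd} P^\eps$ and $|r\omega_\bj - a_\bj| \ll H^{Nd} P^{\eps - |\bj|_1}$. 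Substituting $H^{Nd} < C^{-Nd} P^{-Nd\xi} M$ gives
\[
r \ll C^{-Nd} P^{-Nd\xi + \eps} M, \qquad |r\omega_\bj - a_\bj| \ll C^{-Nd} P^{-Nd\xi + \eps - |\bj|_1} M.
\]

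The next step is to select $\bj^*$ with $1 \le |\bj^*|_1 \le d-1$ achieving the maximum in the definition of $M$ and to do a case analysis on whether $a_{\bj^*}$ vanishes. If $a_{\bj^*} = 0$, then $r|\omega_{\bj^*}| \ll C^{-Nd} P^{-Nd\xi + \eps - |\bj^*|_1} M$, and multiplying by $P^{|\bj^*|_1}/M$ yields $r \ll C^{-Nd} P^{-Nd\xi + \eps}$, which contradicts $r \ge 1$ provided $\eps < Nd\xi$ and $C, P$ are large. If instead $a_{\bj^*} \ne 0$, so that $|a_{\bj^*}| \ge 1$, the triangle inequality combined with the two displayed estimates above gives
\[
1 \le |a_{\bj^*}| \le r|\omega_{\bj^*}| + |r\omega_{\bj^*} - a_{\bj^*}| \ll C^{-Nd} P^{-Nd\xi + \eps - |\bj^*|_1}(M^2 + M).
\]
Using $M \le P^{Nd\xi + Nd/(2^d N+1)}$ and $|\bj^*|_1 \ge 1$, the dominant $M^2$ term is bounded by $C^{-Nd} P^{Nd\xi + \eps - 1 + 2Nd/(2^d N+1)}$. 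The key numerical fact that makes this work is that $2Nd/(2^d N + 1) < 1$ for all $d \ge 2$ (with deficit at least $1/(2^d N+1)$), so the exponent is strictly negative once $\xi$ and $\eps$ are sufficiently small. This contradicts $|a_{\bj^*}| \ge 1$, completing the reductio.

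The main obstacle, and the only delicate point, is the verification that the quantitative gap $1 - 2Nd/(2^d N+1) > 0$ really holds for all $d \ge 2$; this is precisely what justifies the particular exponent $\xi + (2^d N+1)^{-1}$ in the hypothesis, and it is the reason the proof narrowly succeeds for $d=2$ where one has equality $2d = 2^d$. Everything else is routine book-keeping with the constants generated by Lemma \ref{bak}.
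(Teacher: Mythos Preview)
Your proposal is correct and follows essentially the same contradiction argument via Lemma \ref{bak} as the paper. The only cosmetic difference is that the paper defines $H$ directly by $(HP^\xi)^{Nd} = P^{|\bj|_1}|\omega_\bj|$ rather than via $|g| = P^n H^{-1}$, which lets it merge your two cases into a single step (first forcing $a_\bj = 0$, then contradicting the definition of $H$); the underlying mechanism and the key numerical inequality $2Nd < 2^dN+1$ are identical.
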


\begin{proof}
Let $\bj \in \bZ_{\ge 0}^n$ be such that $1 \le |\bj|_1 = j \le d-1$, and determine $H > 0$ by
\begin{equation} \label{Hdef}
P^j|\omega_\bj| = (HP^\xi)^{Nd}.
\end{equation}
Note that we have \eqref{Hlimit}. Assume for a contradiction that
\[
|g(\balp, \bom_\diam)| \ge P^{n+\xi} (P^j |\omega_\bj| )^{-(Nd)^{-1}},
\]
for some $P$ that is large in terms of $\xi$. Then $|g(\balp, \bom_\diam)| \ge  P^n H^{-1}$, so by Lemma \ref{bak} there exist $r,a_\bj \in \bZ$ satisfying $0< r \ll H^{Nd} P^\xi$ and
\begin{equation} \label{2sub}
r\omega_\bj - a_\bj \ll H^{Nd} P^{\xi-j}.
\end{equation}
The triangle inequality and \eqref{Hdef} now give
\[
a_\bj \ll H^{Nd} P^{\xi-j} + (H^{Nd} P^\xi) \cdot (H^{Nd} P^{Nd\xi-j}).
\]
By \eqref{Hlimit}, we must now have $a_\bj = 0$. Substituting this into \eqref{2sub} yields 
\[
\omega_\bj \ll H^{Nd} P^{\xi - j},
\]
contradicting \eqref{Hdef}. We must therefore have \eqref{second}.
\end{proof}

Put 
\begin{equation}\label{delDef}
\del = (R(R+1) Nd^2+1)\tet_0.
\end{equation}
Recall \eqref{partials} and \eqref{NstarDef}. We henceforth define the $\om_\bj$ ($1 \le |\bj|_1 \le d$) in terms of $\balp$ by \eqref{omegadef}. The following is another consequence of Lemma \ref{bak}.

\begin{cor} \label{BakCor}
Let $\balp \in \fN^*$. Then there exist unique
\[
r \in \bZ, \qquad \ba_\dagger = (a_{\bj})_{1 \le |\bj|_1 \le d} \in \bZ^N
\]
such that
\begin{equation} \label{bak1cor}
1 \le r < P^\del, \qquad \gcd(r,\ba_\dagger)=1
\end{equation}
and
\begin{equation} \label{bak2cor}
|r \omega_\bj -  a_\bj | < P^{\del - |\bj|_1} \qquad (1 \le |\bj|_1 \le d).
\end{equation}
There also exist unique integers $q, a_1, \ldots, a_R$ such that
\begin{equation} \label{qbound}
1 \le q \le P^{R(d-1)\tet_0}, \qquad \gcd(a_1, \ldots, a_R,q) = 1
\end{equation}
and
\begin{equation} \label{qerror}
2 |q\balp - \ba| \le P^{R(d-1)\tet_0 - d}.
\end{equation}
\end{cor}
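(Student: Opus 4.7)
The plan is to obtain the two sets of approximations from two separate inputs: the first via Lemma~\ref{bak} applied to $g(\balp,\bome_\diam)$, and the second directly from the defining condition $\balp\in\fN^*\subset\fN$.

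For the simultaneous approximations to the $\ome_\bj$, I would first use \eqref{Sg} to rewrite the hypothesis $|S(\balp)|>P^{n-R(R+1)d\tet_0}$ from \eqref{NstarDef} as
\[
|g(\balp,\bome_\diam)| > P^{n - R(R+1)d\tet_0} = P^n H^{-1}, \qquad H := P^{R(R+1)d\tet_0}.
\]
Since $\tet_0$ is fixed small (independently of $P$), we may assume $\tet_0 \le \bigl(R(R+1)d(2^d N+1)\bigr)^{-1}$, so that $H^{2^d N+1}\le P$; that is, condition \eqref{Hlimit} of Lemma~\ref{bak} holds. Applying Lemma~\ref{bak} yields unique $r\in\bN$ and $\ba_\dagger\in\bZ^N$ with $\gcd(r,\ba_\dagger)=1$ satisfying
\[
r \ll H^{Nd}P^\eps = P^{R(R+1)Nd^2\tet_0+\eps},\qquad r\ome_\bj-a_\bj\ll P^{R(R+1)Nd^2\tet_0+\eps-|\bj|_1}.
\]
Comparing with the definition \eqref{delDef} of $\del = (R(R+1)Nd^2+1)\tet_0$, and taking $\eps<\tet_0$ (which is permissible since $\eps$ is arbitrary while $\tet_0$ is fixed), the implicit constants are absorbed for $P$ large, delivering \eqref{bak1cor} and \eqref{bak2cor}.

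For the approximations \eqref{qbound} and \eqref{qerror} to the $\alp_k$, I would simply note that $\balp\in\fN^*\subseteq\fN$, so by the definition of $\fN$ (with parameter $\tet=\tet_0$) there exist integers $q,a_1,\ldots,a_R$ satisfying exactly \eqref{qa} and \eqref{FirstApprox} with $\tet=\tet_0$, which is precisely \eqref{qbound} and \eqref{qerror}. The uniqueness of $q$ and $\ba$ was already established in the paragraph following Lemma~\ref{Birch43}: any second approximation $(t,\bb)$ satisfying the same bounds would force $|q^{-1}\ba-t^{-1}\bb|<(qt)^{-1}$, hence $q^{-1}\ba=t^{-1}\bb$, and then coprimality forces $(t,\bb)=(q,\ba)$.

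The main point to double-check is the bookkeeping on exponents: namely that the smallness of $\tet_0$ chosen in \S\ref{BirchType} is indeed compatible both with \eqref{Hlimit} (the hypothesis of Lemma~\ref{bak}) and with the definition \eqref{delDef} of $\del$, so that all implicit constants can be swallowed into a factor of $P^\eps$ with $\eps<\tet_0$. This is a purely numerical check and presents no genuine obstacle, since $\tet_0$ is a free small parameter and $\del$ was defined precisely to accommodate the output of Lemma~\ref{bak}.
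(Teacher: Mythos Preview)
Your proposal is correct and follows essentially the same approach as the paper: apply Lemma~\ref{bak} with $H=P^{R(R+1)d\tet_0}$ (using \eqref{Sg} to pass from $S$ to $g$), then read off \eqref{qbound}--\eqref{qerror} and their uniqueness from $\balp\in\fN^*\subseteq\fN$ and the discussion after Lemma~\ref{Birch43}. The paper's proof is terser but identical in substance; your explicit exponent bookkeeping is exactly what the paper means by ``apply Lemma~\ref{bak} with $H=P^{R(R+1)d\tet_0}$.''
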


\begin{proof} 
Recall that \eqref{Sg} holds with the specialisation \eqref{omegadef}. For existence of satisfactory $r$ and $\ba_\dagger$, apply Lemma \ref{bak} with $H = P^{R(R+1)d \tet_0}$. Our first uniqueness assertion follows in the same way as the uniqueness statement in Lemma \ref{bak}. Existence and uniqueness of $q,a_1,\ldots,a_R$ follow from the definition of $\fN$ and the subsequent discussion, since $\balp \in \fN^* \subseteq \fN$.
\end{proof}

\section{Special approximations}
\label{special}

Recall \eqref{NjDef}, and that the $\om_\bj$ are now defined in terms of $\balp$ by \eqref{omegadef}. Recall \eqref{Fdef}, and that $\cS$ is the set of $j \in \{1,2,\ldots,d\}$ such that $F_{1,j}, F_{2,j}, \ldots, F_{R,j}$ are linearly independent.

\begin{lemma} \label{SpecLemma}
Let $j \in \cS$ and $\balp \in \bR^R$. Let $r, a_\bj \in \bZ$ $(|\bj|_1 = j)$. Then there exist integers $D_j \ne 0$ and $a_{k,j}$ $(1 \le k \le R)$ such that
\[
D_j r \alp_k \mu^{d-j} - a_{k,j} \ll \max_{|\bj|_1 = j} |r \omega_\bj - a_\bj| \qquad (1 \le k \le R)
\]
and $D_j$ is bounded in terms of $\bf$.
\end{lemma}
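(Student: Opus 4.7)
The plan is a direct linear-algebraic inversion. Setting $\beta_k = \alpha_k \mu^{d-j}$ for brevity, the defining relation \eqref{omegadef} reads $\omega_\bj = \sum_{k \le R} d_{k,\bj} \beta_k$ whenever $|\bj|_1 = j$. Let $A$ be the $N_j \times R$ integer matrix whose row indexed by $\bj$ (with $|\bj|_1 = j$) is $(d_{k,\bj})_{1 \le k \le R}$. The columns of $A$ are precisely the coefficient vectors of $F_{1,j}, \ldots, F_{R,j}$, so the hypothesis $j \in \cS$ is equivalent to $\rank(A) = R$. The given hypothesis then reads
\[
\bigl| r A \boldsymbol{\beta} - \ba_\dagger \bigr|_{\infty} \le E, \qquad E := \max_{|\bj|_1 = j} |r\omega_\bj - a_\bj|,
\]
where $\ba_\dagger = (a_\bj)_{|\bj|_1 = j}$.

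Since $\rank(A) = R$, I may select $R$ rows of $A$ indexed by some $\bj_1, \ldots, \bj_R$ that form an invertible $R \times R$ integer matrix $B$. Both $B$ and $\det B$ depend only on $\bf$, and the adjugate identity tells us that $(\det B) B^{-1}$ has integer entries also bounded in terms of $\bf$. Restricting the above system to the selected rows gives $|rB\boldsymbol{\beta} - \bb| \le E$ where $\bb = (a_{\bj_1}, \ldots, a_{\bj_R})^\top$. Left-multiplying by the integer matrix $(\det B) B^{-1}$ yields
\[
\bigl| (\det B) r \boldsymbol{\beta} - (\det B) B^{-1} \bb \bigr| \ll E,
\]
with an implicit constant depending only on $\bf$. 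Thus I take $D_j := \det B$ (a nonzero integer depending only on $\bf$) and $a_{k,j} := \bigl( (\det B) B^{-1} \bb \bigr)_k$ to obtain integers satisfying the desired bound for each $k$.

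The only substantive content is the observation that the linear independence of the forms $F_{1,j}, \ldots, F_{R,j}$ is the same as the column-rank of $A$ being full; everything else is the standard trick of inverting an integer matrix via its adjugate to keep both the new approximations integral and the loss under control. No obstacle arises: the choice of rows is finite and the constants $D_j$ and $|(\det B) B^{-1}|$ are genuine constants, absorbed by the implicit constant in the Vinogradov notation fixed in the paper's conventions.
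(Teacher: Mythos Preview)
Your proof is correct and follows essentially the same approach as the paper: both arguments write the relation $\omega_\bj = \sum_k d_{k,\bj}\,\alpha_k\mu^{d-j}$ as a matrix equation, use $j\in\cS$ to extract an invertible $R\times R$ integer submatrix, and then invert via the adjugate to obtain $D_j$ and the integers $a_{k,j}$. The only differences are notational.
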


\begin{proof}
As in the proof of Lemma \ref{bak}, we fix an ordering of the $\bj$ ($|\bj|_1 = j$) by writing $\{ \bj_{1,j}, \ldots, \bj_{N_j,j} \}$ for the set of $\bj \in \bZ_{\ge 0}^n$ such that $|\bj|_1 = j$. From \eqref{omegadef}, we have 
\[
\Omega_j = C_j Y_j,
\]
where
\[
\Omega_j = \begin{pmatrix}
\omega_{\bj_{1,j}} \\
\vdots \\
\omega_{\bj_{N_j,j}}
\end{pmatrix},
\quad
C_j = \begin{pmatrix}
d_{1,\bj_{1,j}} &\ldots& d_{R,\bj_{1,j}} \\
\vdots && \vdots \\
d_{1, \bj_{N_j,j}}&\ldots& d_{R, \bj_{N_j,j}}
\end{pmatrix},
\quad
Y_j =
\begin{pmatrix}
\alp_1 \mu^{d-j} \\
\vdots \\
\alp_R \mu^{d-j}
\end{pmatrix}.
\]
We note from \eqref{numvars} and \eqref{NjDef} that $N_j \ge R$. The condition $j \in \cS$ ensures that the $R$ columns of $C_j$ are linearly independent, and it follows from linear algebra that $C_j$ contains $R$ linearly independent rows, indexed say by $T_j \subseteq \{1,2,\ldots, N_j \}$ (row rank equals column rank). Form $C'_j$ by assembling these rows of $C_j$ to form an invertible $R \times R$ matrix, and let $A'_j =  (a_{\bj_{t,j}})_{t \in T_j}$ be the $R \times 1$ matrix formed by assembling the same rows of $(a_{\bj_{t,j}})_{1 \le t \le N_j}$. We put $D_j = \det(C'_j)$ and 
\[
\begin{pmatrix}
a_{1,j} \\
\vdots \\
a_{R,j}
\end{pmatrix}
= 
D_j (C'_j)^{-1} A'_j.
\]
Define the $R \times 1$ matrix $\Omega'_j = (\omega_{\bj_{t,j}})_{t \in T_j}$. Now $\Om'_j = C'_j Y_j$, so
\[
D_j r Y_j
-
\begin{pmatrix}
a_{1,j} \\
\vdots \\
a_{R,j}
\end{pmatrix}
=
D_j (C'_j)^{-1} (r \Omega'_j - A'_j),
\]
completing the proof.
\end{proof}

As $d,d-1 \in \cS$, we have the following corollary.

\begin{cor} \label{SpecCor}
Let $\balp \in \fN^*$. Let the integers $r$ and $a_\bj$ $(1 \le |\bj|_1 \le d)$ be as determined by Corollary \ref{BakCor}. Then there exists $C_\bf > 1$, depending only on $\bf$, as well as $D,E \in \bZ \setminus \{0\}$ and $\ba_1, \ba_2 \in \bZ^R$
such that 
\begin{equation} \label{DE}
|D|, |E| \le C_\bf, 
\end{equation}
\begin{equation} \label{Dr} 
|Dr \balp - \ba_1| \ll \max_{|\bj|_1 = d} |r \omega_\bj - a_\bj|
\end{equation}
and
\begin{equation} \label{Er}
|Er \mu \balp - \ba_2| \ll \max_{|\bj|_1 = d-1} |r \omega_\bj - a_\bj|.
\end{equation}
The choice of $(D, E, \ba_1, \ba_2)$ is unique if we impose the further conditions
\begin{equation} \label{further}
D,E > 0, \quad \gcd(D,\ba_1) = \gcd(E,\ba_2) = 1.
\end{equation}
\end{cor}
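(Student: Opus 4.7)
The plan is to apply Lemma \ref{SpecLemma} twice, with $j=d$ and with $j=d-1$, and then perform a cosmetic tidy-up to achieve positivity and coprimality. The hypothesis of Lemma \ref{SpecLemma} is that $j \in \cS$, and both $d \in \cS$ and $d-1 \in \cS$ have already been established in the introduction (the former via \eqref{top} and the linear independence of $f_1,\ldots,f_R$, which follows from the hypothesis on the rank condition, and the latter via the identification $F_{k,d-1} = (1,\ldots,1) \cdot \nabla f_k$ together with the linear independence assumption).

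First I take the $r,a_\bj$ supplied by Corollary \ref{BakCor}. Applying Lemma \ref{SpecLemma} with $j=d$ (so that $\mu^{d-j}=1$) produces a nonzero integer $D_d$, bounded in terms of $\bf$, and integers $a_{k,d}^{(0)}$ $(1 \le k \le R)$ satisfying
\[
|D_d r \alp_k - a_{k,d}^{(0)}| \ll \max_{|\bj|_1=d} |r\omega_\bj - a_\bj|.
\]
Replacing $(D_d,\ba_1^{(0)})$ by $(|D_d|, \mathrm{sgn}(D_d)\ba_1^{(0)})$ and then dividing through by $\gcd(|D_d|,\ba_1^{(0)})$ yields a positive integer $D$ and a vector $\ba_1 \in \bZ^R$ with $\gcd(D,\ba_1)=1$ and \eqref{Dr}. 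Running the same procedure with $j=d-1$ (so $\mu^{d-j}=\mu$) delivers $E>0$ and $\ba_2$ with $\gcd(E,\ba_2)=1$ and \eqref{Er}. Since both $D_d$ and $D_{d-1}$ are bounded solely in terms of $\bf$, the bound $|D|,|E| \le C_\bf$ of \eqref{DE} follows for some sufficiently large constant $C_\bf$ depending only on $\bf$.

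For uniqueness, suppose $(D',\ba_1')$ also satisfies $D'>0$, $|D'|\le C_\bf$, $\gcd(D',\ba_1')=1$ and \eqref{Dr}. Then
\[
|D\ba_1' - D'\ba_1| \;\le\; D|D'r\balp-\ba_1'| + D'|Dr\balp-\ba_1| \;\ll\; \max_{|\bj|_1=d}|r\omega_\bj-a_\bj| \;<\; P^{\del-d}
\]
by \eqref{bak2cor}. Since $\del$ is a fixed small constant (recall \eqref{delDef} and that $\tet_0$ is small) and $d \ge 2$, the right-hand side is less than $1$ for $P$ large. Hence $D\ba_1' = D'\ba_1$ as integer vectors, and the two coprimality conditions together with the positivity of $D,D'$ force $D=D'$ and $\ba_1 = \ba_1'$. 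The same argument applied to $\max_{|\bj|_1=d-1}|r\omega_\bj - a_\bj| < P^{\del-(d-1)}$ settles uniqueness of $(E,\ba_2)$ in exactly the same way.

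The only mild obstacle is checking that the error bound $P^{\del-d}$ (or $P^{\del-(d-1)}$) is genuinely $o(1)$: this is where one needs to recall that $\del = (R(R+1)Nd^2+1)\tet_0$ was defined with $\tet_0$ chosen small, so $\del < d-1$ is automatic and no further conditions on $\tet_0$ are needed. Everything else is bookkeeping.
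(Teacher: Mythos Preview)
Your proof is correct and follows essentially the same route as the paper's: apply Lemma \ref{SpecLemma} at $j=d$ and $j=d-1$ for existence, then use the triangle inequality together with \eqref{bak2cor} to force $D'\ba_1 = D\ba_1'$ (and likewise for $E,\ba_2$), after which coprimality and positivity give uniqueness. Your write-up is slightly more explicit about the normalisation step (sign and gcd) than the paper's, but the argument is the same.
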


\begin{proof} For existence, apply Lemma \ref{SpecLemma} with $j=d$ and then with $j=d-1$. For uniqueness, suppose we also have \eqref{DE}, \eqref{Dr}, \eqref{Er} and \eqref{further} with $D',E',\ba_1',\ba_2'$ in place of $D,E,\ba_1,\ba_2$. Combining these bounds with \eqref{bak2cor} and the triangle inequality gives
\[
|D^{-1} \ba_1 - (D')^{-1} \ba'_1| < (DD')^{-1}
\]
and
\[
|E^{-1} \ba_2 - (E')^{-1} \ba'_2| < (EE')^{-1},
\]
so $D^{-1}\ba_1 = (D')^{-1}\ba'_1$ and $E^{-1}\ba_2 = (E')^{-1}\ba'_2$. Having made the assumptions \eqref{further} and
\[
D',E' > 0, \quad \gcd(D',\ba'_1) = \gcd(E',\ba'_2) = 1,
\]
we must now have $(D',E',\ba_1',\ba_2') = (D,E,\ba_1,\ba_2)$.
\end{proof}

Henceforth, fix $C_\bf$ to be as in Corollary \ref{SpecCor}. Recall \eqref{Ndef}. For $r,D,E,q \in \bN$,
\[
\ba_\dagger = (\ba_\bj)_{1 \le |\bj|_1 \le d} \in \bZ^N, \quad \ba_1, \ba_2,\ba \in \bZ^R,
\]
write
\begin{equation} \label{suppress}
\cX = (r,D,E,q,\ba_\dagger,\ba_1, \ba_2,\ba),
\end{equation}
and let
$\fR(\cX) = \fR_P(\cX) $ be the set of $\balp \in \bR^R$ satisfying \eqref{bak1cor}, \eqref{bak2cor}, \eqref{qbound}, \eqref{qerror}, \eqref{DE}, \eqref{Dr}, \eqref{Er} and \eqref{further}. Let $\fR = \fR_P$ be the union of these sets. This union is disjoint, as with uniqueness in Corollaries \ref{BakCor} and \ref{SpecCor}. These corollaries also tell us that
\begin{equation} \label{subset}
\fN^* \subseteq \fR.
\end{equation} 
Recall \eqref{partials}.

\begin{lemma} \label{identities}
Suppose $\fR_P(\cX) \ne \emptyset$. Then
\begin{equation} \label{equality1}
(Dr)^{-1} \ba_1 = q^{-1} \ba,
\end{equation}
and $q$ divides $Dr$. We must also have
\begin{equation} \label{equality2}
r^{-1} a_\bj = q^{-1} \sum_{k \le R} d_{k,\bj} a_k \qquad (|\bj|_1 = d).
\end{equation}
\end{lemma}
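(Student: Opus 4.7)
The plan is to deduce each of the three assertions by the same recipe: take two rational approximations to a common real vector, use the triangle inequality to bound an integer difference, and observe that in every case the resulting error is a power of $P$ with exponent proportional to $\tet_0$. Recalling \eqref{delDef} and that $\tet_0$ is small, the exponent will beat $P^{-d}$ by a small margin, so the integer difference will be forced to vanish.

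First I will fix any $\balp \in \fR_P(r, D, E, q, \ba_\dagger, \ba_1, \ba_2, \ba)$ and write
\[
q \ba_1 - Dr \ba = q(\ba_1 - Dr\balp) + Dr(q\balp - \ba).
\]
Bounding the first term via \eqref{Dr} and \eqref{bak2cor} together with \eqref{qbound}, and the second via \eqref{DE}, \eqref{bak1cor} and \eqref{qerror}, yields a quantity of size $\ll P^{\del + R(d-1)\tet_0 - d}$, which is $o(1)$ once $\tet_0$ is sufficiently small. Since $q\ba_1 - Dr\ba \in \bZ^R$, it must therefore vanish, giving \eqref{equality1}.

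For the divisibility $q \mid Dr$, I will rewrite \eqref{equality1} as $Dr\,\ba = q\ba_1$ and combine it with the joint coprimality $\gcd(a_1,\ldots,a_R,q)=1$ from \eqref{qbound}. Setting $g = \gcd(q, Dr)$ and writing $q = gq'$, $Dr = gs$ with $\gcd(q', s) = 1$, the relation becomes $s a_k = q' a_{1,k}$ for each $k$, which forces $q' \mid a_k$ for every $k$; since also $q' \mid q$, this gives $q' \mid \gcd(a_1,\ldots,a_R, q) = 1$, so $q = g$ divides $Dr$. This is the step I expect to give the most pause, as the hypothesis provides only the \emph{joint} gcd of $\ba$ with $q$, not per-component coprimality, so one has to peel off $\gcd(q,Dr)$ carefully and transfer the divisibility onto $\ba$ before invoking the joint gcd.

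Finally, for \eqref{equality2} I will use the identity $\omega_\bj = \sum_{k \le R} d_{k,\bj}\alp_k$ from \eqref{omegatop} (valid because $|\bj|_1 = d$) to write
\[
q a_\bj - r \sum_{k \le R} d_{k,\bj}\, a_k = q(a_\bj - r\omega_\bj) - r \sum_{k \le R} d_{k,\bj}(a_k - q\alp_k).
\]
By \eqref{bak2cor}, \eqref{qerror}, \eqref{qbound} and \eqref{bak1cor}, the right hand side has absolute value $\ll P^{\del + R(d-1)\tet_0 - d} = o(1)$, forcing the integer left hand side to vanish and yielding \eqref{equality2}.
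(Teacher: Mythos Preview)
Your proof is correct and follows essentially the same approach as the paper's: in each case you compare two rational approximations to a common real vector via the triangle inequality, obtain an error of order $P^{\del + R(d-1)\tet_0 - d} = o(1)$, and conclude that an integer quantity must vanish. The only cosmetic difference is that the paper works with the fractional difference $(Dr)^{-1}\ba_1 - q^{-1}\ba$ (respectively $r^{-1}a_\bj - q^{-1}\sum_k d_{k,\bj}a_k$) and bounds it below $(Drq)^{-1}$ (respectively $(qr)^{-1}$), whereas you multiply through and bound the integer $q\ba_1 - Dr\ba$ (respectively $qa_\bj - r\sum_k d_{k,\bj}a_k$) by $o(1)$ directly; also, the paper dispatches the divisibility $q \mid Dr$ in one line from $\gcd(a_1,\ldots,a_R,q)=1$, while you spell out that step more carefully.
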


\begin{proof}
Let $\balp \in \fR_P(\cX)$. From \eqref{bak2cor}, \eqref{qerror}, \eqref{Dr} and the triangle inequality, we see that
\[
|(Dr)^{-1} \ba_1 - q^{-1} \ba| \ll q^{-1}  P^{R(d-1)\tet_0 - d} + r^{-1} P^{\del - d}.
\]
By \eqref{bak1cor}, \eqref{qbound} and \eqref{DE}, we now have
\[
|(Dr)^{-1} \ba_1 - q^{-1} \ba| < (Drq)^{-1},
\]
which implies \eqref{equality1}. Hence $q$ divides $Dr$, since $\gcd(a_1, \ldots, a_R, q) = 1$. Now let $\bj \in \bZ_{\ge 0}^n$ be such that $|\bj|_1 = d$. We see from \eqref{omegadef} and \eqref{bak2cor} that
\[
\Bigl |\sum_{k \le R} d_{k,\bj} \alp_k - a_\bj / r \Bigr | < r^{-1} P^{\del-d}.
\]
Combining this with \eqref{qerror} and the triangle inequality yields
\[
r^{-1} a_\bj - q^{-1} \sum_{k \le R} d_{k,\bj} a_k \ll r^{-1} P^{\del-d} + q^{-1} P^{R(d-1)\tet_0 -d}.
\]
In light of \eqref{bak1cor} and \eqref{qbound}, we now have
\[
\Bigl | r^{-1} a_\bj - q^{-1} \sum_{k \le R} d_{k,\bj} a_k \Bigr| < (qr)^{-1},
\]
which establishes \eqref{equality2}.
\end{proof}

\section{Adaptations of known bounds}
\label{classical}

In this section we consider $S(\balp)$ for $\balp \in \fR$. Let $r, D, q \in \bN$, where $D \le C_\bf$ and $q$ divides $Dr$. Assume that $Dr \le P$. Let $\ba \in \bZ^R$ and
\begin{equation} \label{dagger}
\ba_\dagger = (a_{\bj})_{1 \le |\bj|_1 \le d} \in \bZ^N,
\end{equation}
where we recall \eqref{NjDef} and \eqref{Ndef}. Recall that the $\om_\bj$ are defined in terms of $\balp$ by \eqref{omegadef}, and put 
\begin{equation} \label{spec1}
\balp = q^{-1} \ba + \bz, \qquad \omega_\bj = r^{-1} a_\bj  + z_\bj \quad (1 \le |\bj|_1 \le d-1).
\end{equation}
Recall \eqref{gdef} and \eqref{Sg}. Our starting point is the calculation
\begin{equation} \label{gnote}
g(\balp, \bome_\diam) 
= \sum_{\bx \mmod Dr} e \Bigl( q^{-1} \ba \cdot \bf(\bx)
+ r^{-1} \sum_{1 \le |\bj|_1 \le d-1} a_\bj \bx^\bj \Bigr) S_{Dr} (\bx),
\end{equation}
where
\[
S_{Dr}(\bx) = \sum_{\substack{|\by| \le P \\ \by \equiv \bx \mmod Dr}} 
e\Bigl( \bz \cdot \bf(\by) + \sum_{1 \le |\bj|_1 \le d-1} z_\bj \by^\bj \Bigr).
\]

For $\bgam \in \bR^R$ and 
\begin{equation} \label{diam}
\bgam_\diam = (\gam_\bj)_{1 \le |\bj|_1 \le d-1} \in \bR^{N_1 + \ldots + N_{d-1}},
\end{equation}
write
\begin{equation} \label{Idef}
I(\bgam, \bgam_\diam) = \int_{[-1,1]^n} e\Bigl( \bgam \cdot \bf(\bt) + \sum_{1 \le |\bj|_1 \le d-1} \gam_\bj \bt^\bj \Bigr) \d \bt.
\end{equation}
By \cite[Lemma 8.1]{Bro2009} and a change of variables, we have
\begin{equation} \label{SIcompare}
S_{Dr}(\bx) = (P/(Dr))^n I(\bgam, \bgam_\diam) + O((P/r)^{n-1} (1+|\bgam| + |\bgam_\diam|))
\end{equation}
with 
\begin{equation} \label{spec2}
\bgam = P^d \bz, \qquad
\gam_\bj = P^{|\bj|_1} z_\bj \qquad (1 \le |\bj|_1 \le d-1).
\end{equation}
Let
\begin{equation} \label{SraDef}
S_{r,D,q}(\ba, \ba_\dag) =\sum_{\bx \mmod Dr} e \Bigl( q^{-1} \ba \cdot \bf(\bx)
+ r^{-1} \sum_{1 \le |\bj|_1 \le d-1} a_\bj \bx^\bj \Bigr).
\end{equation}
Since $D \le C_\bf$, substituting \eqref{SIcompare} into \eqref{gnote} shows that
\begin{equation} \label{one}
g(\balp, \bom_\diam) - P^n (Dr)^{-n} S_{r,D,q}(\ba,\ba_\dagger)I(\bgam, \bgam_\diam) \ll rP^{n-1}(1+|\bgam| + |\bgam_\diam|),
\end{equation}
with \eqref{spec2}. Specialising $r=D=q=1$, $\ba = \bzero$, and $\ba_\dagger = \bzero$ yields
\begin{equation} \label{two}
g(\balp, \bom_\diam) = P^n I(\bgam, \bgam_\diam) + O(P^{n-1}(1+|\bgam| + |\bgam_\diam|))
\end{equation}
with 
\begin{equation} \label{WeirdGam}
\bgam = P^d \balp, \qquad \gam_\bj = P^{|\bj|_1} \omega_\bj \quad (1 \le |\bj|_1 \le d-1).
\end{equation}

Emulating \cite[Lemma 5.2]{Bir1962} or \cite[Lemma 8.8]{Bro2009}, we combine \eqref{first}, \eqref{second} and \eqref{two} in order to bound $I(\bgam, \bgam_\diam)$, uniformly for $\bgam \in \bR^R$ and $\bgam_\diam \in \bR^{N_1 + \ldots + N_{d-1}}$. 

\begin{lemma} \label{IboundLemma} Let $\lam$ be a small positive real number. Then for $\bgam \in \bR^R$ and $\bgam_\diam \in \bR^{N_1+N_2+ \ldots + N_{d-1}}$, we have
\begin{equation} \label{Ibound}
I(\bgam, \bgam_\diam) \ll_\lam (1 + |\bgam|^{\kap - \lam} + |\bgam_\diam|^{(Nd)^{-1} - \lam})^{-1}.
\end{equation}
\end{lemma}

\begin{proof} 
As $I(\bgam, \bgam_\diam) \ll 1$, we may assume that $|\bgam| + |\bgam_\diam|$ is large. Recall that \eqref{two} holds with \eqref{WeirdGam}. This, \eqref{first} and \eqref{second} show that
\[
I(\bgam, \bgam_\diam) \ll \frac{P^{\lam n^{-2} N^{-1}}} {|\bgam|^\kap + |\bgam_\diam|^{(Nd)^{-1}}} 
+ \frac{1+|\bgam| + |\bgam_\diam|}P
\]
whenever $|\bgam| < P^{d/2}$ and $|\bgam_\diam| \le (P^{\lam n^{-2} N^{-1} + (2^dN + 1)^{-1}})^{Nd}$. Recall \eqref{numvars}. As $|\bgam| + |\bgam_\diam|$ is large and $I(\bgam, \bgam_\diam)$ does not depend on $P$, we are free to choose $P = (|\bgam| + |\bgam_\diam|)^n$, which gives
\[
I(\bgam, \bgam_\diam) \ll \frac{(|\bgam| + |\bgam_\diam|)^{\lam (Nn)^{-1}}}
 {|\bgam|^\kap + |\bgam_\diam|^{(Nd)^{-1}}}.
\]
Recall \eqref{kapDef}. By cross-multiplying and considering cases, we may now deduce that
\[
I(\bgam, \bgam_\diam) \ll (|\bgam|^{\kap - \lam} + |\bgam_\diam|^{(Nd)^{-1}-\lam})^{-1}.
\]
As $|\bgam| + |\bgam_\diam| > 1$, this yields \eqref{Ibound}.
\end{proof}

In analogy with \cite[Lemma 15.3]{Dav2005}, we deduce the following bound. We note from Lemma \ref{identities} that the conditions below are necessarily met whenever $\fR_P(\cX) \ne \emptyset$.

\begin{lemma} \label{Sra} Let $\psi > 0$, $q \in \bN$ and $\ba \in \bZ^R$ be such that $\gcd(a_1, \ldots, a_R, q) = 1$. Let $D \in \bN$ with $D \le C_\bf$. Let $r \in \bN$ be such that $q$ divides $Dr$, and let $\ba_\dag$ be as in \eqref{dagger}. Then
\begin{equation} \label{SraBound}
S_{r,D,q}(\ba, \ba_\dag) \ll_\psi r^n q^{\psi - \kap}.
\end{equation}
\end{lemma}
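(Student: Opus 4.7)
The plan is to exploit the periodicity structure of the integrand, reduce to a Weyl sum of length $q$ whose top-degree phase is $q^{-1}\ba \cdot \bf$, and then invoke the classical Birch--Davenport estimate.

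Write $g = \gcd(q,r)$, $q = gq_1$ and $r = gr_1$, so that $\gcd(q_1,r_1) = 1$. Since $q \mid Dr$ we have $q_1 \mid Dr_1$, hence $q_1 \mid D$, so in particular $q_1 \le C_\bf$. The integrand of $S_{r,D,q}(\ba,\ba_\dag)$ is periodic modulo $M := \lcm(q,r) = qr_1$, and a short calculation gives $Dr/M = D/q_1 \in \bN$. Setting $\Phi(\bx) = \sum_{1 \le |\bj|_1 \le d-1} a_\bj \bx^\bj$, I would start from
\[
S_{r,D,q}(\ba,\ba_\dag) = (D/q_1)^n \sum_{\bx \mmod M} e\bigl(q^{-1}\ba \cdot \bf(\bx) + r^{-1}\Phi(\bx)\bigr).
\]

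Next I would split $\bx = \by + q\bp$, with $\by$ ranging over a complete residue system modulo $q$ and $\bp$ ranging over one modulo $r_1$. Since $\Phi$ has integer coefficients, there is an integer polynomial $\Psi$ in $(\by,\bp)$ with $\Phi(\by + q\bp) = \Phi(\by) + q\Psi(\by,\bp)$, and the double sum becomes
\[
\sum_{\bp \mmod r_1} \sum_{\by \mmod q} e\bigl(q^{-1}\ba \cdot \bf(\by) + r^{-1}\Phi(\by) + (q_1/r_1)\Psi(\by,\bp)\bigr).
\]
For each fixed $\bp$, the inner sum is a Weyl-type sum of length $q$ in $n$ variables whose top-degree component is $q^{-1}\ba \cdot \bf$ with $\gcd(a_1,\ldots,a_R,q) = 1$, and whose remaining terms are polynomials in $\by$ of total degree less than $d$. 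The classical estimate of \cite[Lemma 15.3]{Dav2005} (compare \cite[Lemma 5.2]{Bir1962}) gives $\ll_\psi q^{n+\psi-\kap}$ uniformly in $\bp$. Summing trivially over the $r_1^n$ values of $\bp$ and using the arithmetic identity $(D/q_1)^n r_1^n q^n = D^n r^n \le C_\bf^n r^n$ (which follows from $r_1 q = r q_1 = rq/g$) then yields the required bound \eqref{SraBound}.

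The main subtlety is justifying this Weyl bound uniformly in $\bp$: the rational perturbation $(q_1/r_1)\Psi(\by,\bp)$ destroys the $q$-periodicity of the inner sum in $\by$, so one cannot simply quote a bound for complete exponential sums modulo $q$. This is harmless, for the reason recorded in \S\ref{BirchType}: the $(d-1)$-fold iterated Weyl differencing that underpins the Birch--Davenport bound annihilates every polynomial in $\by$ of degree less than $d$, so the resulting estimate depends only on the top-degree form $\ba \cdot \bf$ and on the denominator $q$. Granting this uniformity, the remainder of the argument is purely combinatorial bookkeeping.
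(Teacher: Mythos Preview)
Your proposal is correct and follows essentially the same route as the paper: both arguments partition the sum over residues modulo $Dr$ into $(Dr/q)^n$ sub-sums over boxes of side $q$, apply the Birch--Davenport Weyl estimate (Lemma~\ref{Birch43} with $P=q$, which is insensitive to lower-order perturbations) to each sub-sum, and then sum trivially. The paper packages this as a contradiction argument while you state the bound directly and make the $\lcm(q,r)$-periodicity explicit, but the substance is identical; note only that the precise reference for the inner bound is \cite[Lemma~4.3]{Bir1962} (the paper's Lemma~\ref{Birch43}) rather than \cite[Lemma~5.2]{Bir1962}.
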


\begin{proof}
We may assume without loss that $\psi < 1$. Since $|S_{r,D,q}(\ba, \ba_\dag)| \le (Dr)^n$, we may assume $q$ to be large in terms of $\psi$. Suppose for a contradiction that
\[
|S_{r,D,q}(\ba, \ba_\dag)| > (Dr)^n q^{\psi - \kap}.
\]
Break $S_{r,D,q}(\ba, \ba_\dag)$ into $(Dr/q)^n$ sums, parametrised by $\bv \in \{1,2,\ldots,Dr/q\}^n$. The sum associated to a given $\bv$ is
\begin{equation} \label{shape}
\sum_{1 \le y_1, \ldots, y_n \le q} e \Bigl( q^{-1} \ba \cdot \bf(\by + q \bv) +
\sum_{1 \le |\bj|_1 \le d-1} r^{-1} a_\bj (\by + q \bv)^\bj
\Bigr)
\end{equation}
and, by the triangle inequality, at least one such sum must exceed $q^{n + \psi - \kap}$ in absolute value. Fix $\bv \in \{1,2,\ldots,Dr/q\}^n$ so that the expression \eqref{shape} exceeds $q^{n+\psi - \kap}$ in absolute value.

The polynomial in the Weyl sum \eqref{shape} is of the shape $q^{-1} \ba \cdot \bf(\by)$ plus lower degree terms. By \eqref{kapDef}, we may apply Lemma \ref{Birch43} with $P=q$ and $\tet = R^{-1}(d-1)^{-1} - \psi/n$. This shows that there exist integers $s, b_1, \ldots, b_R$ such that
\[
1 \le s < q, \qquad |sa_k / q - b_k| < q^{-1} \qquad (1 \le k \le R).
\]
Hence $a_k/q = b_k/s$ ($1 \le k \le R$). This is impossible, since $0 < s < q$ and $\gcd(a_1, \ldots, a_R,q) = 1$. This contradiction implies \eqref{SraBound}.
\end{proof}

In view of \eqref{subset}, we may restrict attention to $\fR$. With \eqref{one} as the harbinger of our endgame, we perceive the need to obtain a nontrivial upper bound for $S_{r,D,q}(\ba, \ba_\dag) \cdot I(\bgam, \bgam_\dag)$ on Davenport--Heilbronn minor arcs. From \eqref{kapBound}, \eqref{spec1} and \eqref{spec2}, we see that the inequalities \eqref{Ibound} and \eqref{SraBound} save a `large' power of $P^d|q\balp - \ba|$ on $\fR$. We shall also need to save a power of $P^{d-1}|Er \mu \balp - \ba_2|$. If $|\balp|$ is somewhat large, the irrationality of $\mu$ will force one of $|q\balp - \ba|$ and $|Er \mu \balp - \ba_2|$ to be somewhat large, leading to a nontrivial estimate.

From \eqref{Er}, \eqref{spec1}, \eqref{diam} and \eqref{spec2}, we see that \eqref{Ibound} saves a power of 
$P^{d-1} |E\mu \balp - r^{-1} \ba_2|$ over a trivial estimate for $I(\bgam, \bgam_\diam)$. Thus, our final task for this stage of the analysis is to save a power of $r$ over a trivial estimate for $S_{r,D,q}(\ba, \ba_\dag)$. Roughly speaking, we achieve this by fixing $x_2, \ldots, x_n$ and then using \cite[Theorem 7.1]{Vau1997} to bound the resulting univariate exponential sum. This entails bounding the greatest common divisor of the coefficients of this latter sum, which leads us to consider several notional changes of variables, much like in the proof of Lemma \ref{bak}. 

\begin{lemma}
Let $D \in \bN$ with $D \le C_\bf$. Let $r,q \in \bN$, and let $\psi > 0$. Let $\ba \in \bZ^R$, and let $\ba_\dag$ be as in \eqref{dagger}. Assume \eqref{equality2}, and that $\gcd(r,\ba_\dag) = 1$. Then
\begin{equation} \label{SraBound2}
S_{r,D,q}(\ba, \ba_\dag) \ll_\psi r^{n-(N_d d)^{-1}+\psi}.
\end{equation}
\end{lemma}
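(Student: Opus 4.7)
The plan begins by exploiting identity~\eqref{equality2}. Since $q \mid Dr$ (Lemma~\ref{identities}) and \eqref{equality2} gives $q^{-1}\sum_{k \le R} d_{k,\bj} a_k = r^{-1} a_\bj$ for $|\bj|_1 = d$, expanding $\ba \cdot \bf(\bx)$ in monomials lets me rewrite the entire exponent of $S_{r,D,q}(\ba, \ba_\dag)$ with denominator $r$. The summand is then periodic modulo $r$ in each coordinate, so
\[
S_{r,D,q}(\ba,\ba_\dag) = D^n T, \qquad T := \sum_{\bx \mmod r} e_r\Big( \sum_{1 \le |\bj|_1 \le d} a_\bj \bx^\bj \Big),
\]
and since $D \le C_\bf$ is bounded, it suffices to show $T \ll_\psi r^{n - (N_d d)^{-1} + \psi}$.

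To reduce to a univariate problem, for each $t \in \{1, \ldots, N_d\}$ I would perform the notional change of variables $\bx = \by + x_1 \bm_t$ with $y_1 = 0$, using the vectors $\bm_1, \ldots, \bm_{N_d}$ of Lemma~\ref{Adam}. Since $\bm_t$ has first entry $1$, this is a bijection on $(\bZ/r\bZ)^n$, and one obtains $T = \sum_{y_2, \ldots, y_n \mmod r} G_t(\by)$, where
\[
G_t(\by) := \sum_{x_1 \mmod r} e_r\bigl( Q_{t,\by}(x_1) \bigr)
\]
and $Q_{t,\by}$ is a degree-$d$ integer polynomial in $x_1$ whose coefficient of $x_1^j$ depends on $\by$ for $j < d$. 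Crucially, its leading coefficient is
\[
L_t := \sum_{|\bj|_1 = d} \bm_t^\bj a_\bj = (M_d \ba^{(d)})_t,
\]
independent of $\by$, where $\ba^{(d)} = (a_\bj)_{|\bj|_1 = d}$. Writing $L_t/r$ in lowest terms and invoking \cite[Theorem~7.1]{Vau1997} with $N = r$ delivers a bound of the shape $G_t(\by) \ll r^{1 - 1/d + \eps} \gcd(L_t, r)^{1/d}$ uniformly in $\by$; summing trivially over $\by$ then gives $T \ll r^{n - 1/d + \eps} \gcd(L_t, r)^{1/d}$ for each $t$.

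Taking the geometric mean of these $N_d$ inequalities yields
\[
T \ll r^{n - 1/d + \eps} \Big( \prod_{t=1}^{N_d} \gcd(L_t, r) \Big)^{1/(N_d d)},
\]
so the target bound reduces to the claim $\prod_{t=1}^{N_d} \gcd(L_t, r) \ll r^{N_d - 1 + o(1)}$. This is where I expect the main obstacle to lie. The invertibility of $M_d$ over $\bQ$ and integrality of $\Del_d M_d^{-1}$ ensure that any common divisor $e$ of $L_1, \ldots, L_{N_d}$ and $r$ satisfies $e/\gcd(e, \Del_d) \mid a_\bj$ for every $|\bj|_1 = d$; however, this alone does not preclude all $\gcd(L_t, r)$ from being close to $r$, since $\gcd(r, \ba_\dag) = 1$ is a statement involving \emph{all} degrees of the $a_\bj$. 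Resolving this requires also inspecting the lower coefficients of $Q_{t, \by}$ at well-chosen $\by$; at $\by = \bzero$, for instance, $Q_{t,\bzero}(x_1) = \sum_{j=1}^d (M_j \ba^{(j)})_t x_1^j$, which combined with the invertibility of every $M_j$ in Lemma~\ref{Adam} opens the way to an inductive argument on $j$ analogous to that driving Lemma~\ref{bak}. The crux is converting this prime-by-prime divisibility analysis into the required product bound on the $\gcd(L_t, r)$.
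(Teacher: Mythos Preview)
Your reduction to $T = \sum_{\bx \mmod r} e_r\bigl(\sum_{1 \le |\bj|_1 \le d} a_\bj \bx^\bj\bigr)$ is correct and matches the paper, and your change of variables $\bx = \by + x_1 \bm_t$ is the right move (indeed your use of the bijection on $(\bZ/r\bZ)^n$ is slightly cleaner than the paper's, which works over an interval and invokes Hua's incomplete-to-complete comparison).  Your bound $G_t(\by) \ll r^{1-1/d+\eps}\gcd(L_t,r)^{1/d}$ is also valid, since $\gcd(r,c_{t,1},\ldots,c_{t,d}) \le \gcd(r,L_t)$.

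The genuine gap is exactly where you flag it, and your proposed repair does not close it.  The reduction to $\prod_t \gcd(L_t,r) \ll r^{N_d-1+o(1)}$ is simply \emph{false} under the stated hypotheses: if every $a_\bj$ with $|\bj|_1=d$ vanishes (which is perfectly compatible with $\gcd(r,\ba_\dag)=1$, as some lower-degree $a_\bj$ may be coprime to $r$), then each $L_t=0$ and the product is $r^{N_d}$.  Inspecting $Q_{t,\bzero}$ does not help, because once you have summed the uniform-in-$\by$ bound over all $r^{n-1}$ values of $\by$, the information from a single $\by$ is gone; and you cannot incorporate the $\by$-dependent lower coefficients into a uniform bound without losing the point of having a bound uniform in $\by$.

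The paper's resolution is to abandon the uniform-bound-then-average approach in favour of a contradiction argument.  Assuming $|T|$ exceeds the target, the triangle inequality produces for each $t$ \emph{some specific} $\by_t$ with $|G_t(\by_t)| > r^{1-(N_d d)^{-1}+\psi}$.  At that particular $\by_t$ one applies \cite[Theorem~7.1]{Vau1997} with the \emph{full} list of coefficients $c_{t,1},\ldots,c_{t,d}$ (not just $L_t$), obtaining $\gcd(r,c_{t,1},\ldots,c_{t,d}) \gg r^{1-1/N_d+\psi}$.  The elementary inequality $(a,b)(a,c)\le a\cdot(a,b,c)$ then gives $\prod_t \gcd(r,c_{t,1},\ldots,c_{t,d}) \le r^{N_d-1}G$, where $G$ is the gcd of $r$ and \emph{all} the $c_{t,j}$ ($1\le t\le N_d$, $1\le j\le d$).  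Now the downward induction you alluded to does work, because $G$ divides every $c_{t,j}$ simultaneously: using the invertibility of each $M_j$ one shows $G \mid \Del_1\cdots\Del_d \gcd(a_\bj : 1\le|\bj|_1\le d)$, whence $G \mid \Del_1\cdots\Del_d \cdot \gcd(r,\ba_\dag) \ll 1$, contradicting $G > r^\psi$.  The key idea you were missing is that the contradiction hypothesis hands you, for each $t$, a specific $\by_t$ at which \emph{all} coefficients of $Q_{t,\by_t}$ enter the gcd, not merely the $\by$-independent leading one.
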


\begin{proof}
By \eqref{fknote}, \eqref{equality2}, \eqref{SraDef} and periodicity, we have
\begin{equation} \label{periodicity}
S_{r,D,q}(\ba, \ba_\dag)  = D^n \sum_{\bx \mmod r} e_r \Bigl( \sum_{1 \le |\bj|_1 \le d} a_\bj \bx^\bj \Bigr).
\end{equation}
Set $y_1 = 0$, and recall that we have fixed $\bm_1, \ldots, \bm_{N_d} \in \bN^n$ as in Lemma \ref{Adam}. Write 
\[
\bm_t = (m_{t,1}, \ldots, m_{t,n}) \qquad (1 \le t \le N_d),
\]
where $m_{t,1} = 1$ ($1 \le t \le N_d$). Equation \eqref{periodicity} and the triangle inequality give
\begin{equation} \label{triangle}
|S_{r,D,q}(\ba, \ba_\dag)| \le D^n \sum_{\substack{y_2, \ldots, y_n: \\ |\by| \le (|\bm_t|+1) r}}  |S(\bm_t, \by)|
\qquad (1 \le t \le N_d),
\end{equation}
where
\[
S(\bm_t, \by) = \sum_{x_1 \in I_r(\bm_t, \by)} e_r\Bigl(
\sum_{1\le |\bj|_1 \le d} \ba_\bj (\by + x_1 \bm_t)^\bj \Bigr);
\]
here
\[
I_r(\bm_t, \by) = \{ x_1 \in \bZ : 1 \le y_1 + m_{t,1} x_1, \ldots, y_n+m_{t,n} x_1 \le r \}
\]
is a discrete subinterval of $\{1,2,\ldots,r\}$. More precisely, given $t$ and $\by$ as above, there exists a real subinterval $[a,b]$ of $[1,r]$ such that $I_r(\bm_t, \by) = [a,b] \cap \bZ$. Suppose for a contradiction that 
\[
|S_{r,D,q}(\ba, \ba_\dag)| > D^n r^{n- (N_d d)^{-1} + \psi} \prod_{t \le N_d} (2|\bm_t| +3)^{n-1},
\]
and that $r$ is large in terms of $\psi$. Then, by \eqref{triangle}, there exist $\by_1, \ldots, \by_{N_d} \in \bZ^n $ such that
\begin{equation} \label{Sbig}
|S(\bm_t, \by_t)| > r^{1 - (N_d d)^{-1} + \psi} \qquad (1 \le t \le N_d).
\end{equation}

In view of the calculation \eqref{zdef}, we see that if $1 \le t \le N_d$ and $1 \le j \le d$ then the coefficient of $x_1^j$ in
\[
\sum_{1 \le |\bj|_1 \le d} a_\bj (\by_t + x_1 \bm_t)^\bj
\]
is
\begin{equation} \label{ctj}
c_{t,j} := \sum_{|\bj|_1 = j} a_\bj \bm_t^\bj + \sum_{j < |\bi|_1 \le d} a_\bi z_{j,\bi} (\bm_t, \by_t).
\end{equation}
At this point we apply \cite[Theorem 7.1]{Vau1997}. It is necessary to remove any common divisors of $r,c_{t,1}, \ldots, c_{t,d}$. Moreover, since \cite[Theorem 7.1]{Vau1997} deals with complete exponential sums, we use an estimate due to Hua \cite[\S 3]{Hua1940} to compare our incomplete exponential sum to the corresponding complete exponential sum. Thus, it follows that
\[
S(\bm_t, \by_t) \ll \gcd(r,c_{t,1}, \ldots, c_{t,d})^{1/d-\eps} r^{1-1/d+\eps} \qquad (1 \le t \le N_d).
\]

Coupling this with \eqref{Sbig}, we deduce that
\[
\gcd(r,c_{t,1}, \ldots, c_{t,d}) \gg r^{1- 1/N_d + \psi} \qquad (1 \le t \le N_d),
\]
so
\begin{equation} \label{gcdBig0}
\prod_{t \le N_d} \gcd(r,c_{t,1}, \ldots, c_{t,d}) > r^{N_d - 1 + \psi}.
\end{equation}
By induction using the inequality 
\[
(a,b)(a,c) \le a \cdot \gcd(a,b,c) \qquad (a, b,c \in \bZ, a > 0),
\]
one can show that
\[
\prod_{t \le N_d} \gcd(r,c_{t,1}, \ldots, c_{t,d}) \le r^{N_d-1} G,
\]
where $G$ is the greatest common divisor of $r$ and the $c_{t,j}$ \mbox{($1 \le t \le N_d$, $1 \le j \le d$).} This and \eqref{gcdBig0} give
\begin{equation} \label{Gbig}
G > r^\psi.
\end{equation}

Note that 
\begin{equation} \label{decompose}
G \le \gcd(r, g_1, \ldots, g_d),
\end{equation}
where
\[
g_j = \gcd(c_{1,j}, \ldots, c_{N_j, j}) \qquad (1 \le j \le d).
\]
We adopt the notation of \eqref{DeljDef}, \eqref{MjExplicit} and the discussion in between. Write
\[
C_j = \begin{pmatrix}
c_{1,j} \\
\vdots \\
c_{N_j,j}
\end{pmatrix},
\qquad
A_j = \begin{pmatrix}
a_{\bj_{1,j}} \\
\vdots \\
a_{\bj_{N_j,j}}
\end{pmatrix}
\qquad (1 \le j \le d).
\]
We shall show by induction from $d$ down to $1$ that if $1 \le j \le d$ then
\begin{equation} \label{gcdInduct}
\gcd(g_j, \ldots, g_d) | \Del_j \cdots \Del_d G_j,
\end{equation}
where $G_j$ is the greatest common divisor of the $a_\bj$ ($j \le |\bj|_1 \le d$). Let $\cD_0$ be a common divisor of $c_{1,d}, \ldots, c_{N_d, d}$. From \eqref{ctj} we have
\[
C_d = M_d A_d.
\]
Hence
\[
\Del_d A_d = \Del_d M_d^{-1}  C_d,
\]
and so $\cD_0$ divides $\Del_d a_\bj$ ($|\bj|_1 = d$). We thus conclude that $g_d | \Del_d G_d$, thereby establishing the case $j=d$ of \eqref{gcdInduct}.

Now let $j \in \{1, 2, \ldots, d-1 \}$, and assume that 
\[
\gcd(g_i, \ldots, g_d) | \Del_i \cdots \Del_d G_i \qquad (j+1 \le  i \le d). 
\]
Let $\cD$ be a common divisor of $g_j, \ldots, g_d$. Then $\cD$ divides $c_{t,j}$ ($1 \le t \le N_j$), and our inductive hypothesis shows that 
\begin{equation} \label{IndImplies}
\cD | \Del_{j+1} \cdots \Del_d a_\bi \qquad (j < |\bi|_1 \le d).
\end{equation}
Equations \eqref{ctj} and \eqref{IndImplies} yield
\[
\Del_{j+1} \cdots \Del_d C_j \equiv \Del_{j+1} \cdots \Del_d  M_j A_j \mmod \cD,
\]
so
\[
\Del_j \cdots \Del_d A_j \equiv (\Del_j M_j^{-1}) \Del_{j+1} \cdots \Del_d C_j \equiv 
\begin{pmatrix}
0 \\
\vdots \\
0
\end{pmatrix} \mmod \cD.
\]
Coupling this with \eqref{IndImplies} yields $\cD | \Del_j \cdots \Del_d G_j$. Hence $\gcd(g_j, \ldots, g_d)$ divides $\Del_j \cdots \Del_d G_j$, and our induction is complete. We now have \eqref{gcdInduct}, in particular for $j=1$. Substituting this into \eqref{decompose} gives
\[
G \ll \gcd(r, G_1) = \gcd(r,\ba_\dag) = 1.
\]
This contradicts \eqref{Gbig}, thereby completing the proof the lemma.
\end{proof}

With \eqref{spec1}, we now specialise \eqref{spec2}. Define $S^*: \fR \to \bC$ as follows: if $\balp \in \fR(\cX)$ then
\begin{equation} \label{SstarDef}
S^*(\balp) = P^n (Dr)^{-n} S_{r, D,q}(\ba, \ba_\dag) I(\bgam, \bgam_\diam) e(\balp \cdot \bf(\bmu)). 
\end{equation}
We note from \eqref{delDef}, \eqref{bak1cor}, \eqref{bak2cor} and \eqref{qerror} that 
\[
r < P^\del, \qquad r|\bgam| < P^{2\del}, \qquad r|\bgam_\diam| < P^\del.
\]
By \eqref{Sg} and \eqref{one}, we now have
\begin{equation} \label{SstarCompare}
S(\balp) = S^*(\balp) + O(P^{n-1+2\del}) \qquad (\balp \in \fR).
\end{equation}
Let $\fU$ be an arbitrary unit hypercube in $R$ dimensions. The measure of $\fN^* \cap \fU$ is $O(P^{R(R+1)(d-1)\tet_0 - Rd})$, so \eqref{delDef}, \eqref{subset} and \eqref{SstarCompare} show that
\[
\int_{\fN^* \cap \fU} |S(\balp) - S^*(\balp)| \d \balp \ll P^{n - Rd - 1 + 3 \del}.
\]
Since $\del$ is small, we now see from \eqref{Tbound}, \eqref{Ldef}, \eqref{Kbounds} and \eqref{Kprod} that
\begin{align} \notag 
\int_{\fN^*} S(\balp) e(-\balp \cdot \btau) \bK_{\pm}(\balp) \d \balp &= \int_{\fN^*} S^*(\balp) e(-\balp \cdot \btau) \bK_{\pm}(\balp) \d \balp \\
\label{IntCompare} &\qquad+ o(P^{n - Rd}).
\end{align}

Let $\balp \in \fR(\cX)$. Equations \eqref{spec1} and \eqref{spec2} give $q \bgam = P^d (q\balp - \ba)$ and
\[
r \gam_\bj = P^{d-1} (r \omega_\bj - a_\bj) \qquad (|\bj|_1 = d-1).
\]
Thus, by \eqref{Er}, \eqref{Ibound}, \eqref{SraBound}, \eqref{SraBound2} and \eqref{SstarDef}, we have
\[
S^*(\balp) \ll P^n(q+P^d |q \balp - \ba|)^{\eps - \kap}
\]
and
\[
S^*(\balp) \ll P^n(r+P^{d-1} |Er \mu \balp - \ba_2 |)^{\eps - (Nd)^{-1}}.
\]
In light of \eqref{kapBound} and the bound $E \le C_\bf$, we now have
\begin{equation} \label{SstarBound}
S^*(\balp) \ll P^n (q+P^d |q \balp - \ba|)^{- R - 1 - \eps} F(\balp)^\eps,
\end{equation}
where
\begin{align} \notag
F(\balp) &= F(\balp; P) \\
 \label{fdef} &= (q+P^d |q \balp - \ba|)^{-1} (Er+P^{d-1} |Er \mu \balp - \ba_2|)^{-1}
\end{align}
is well defined on $\fR = \fR_P$.

\section{Lemmas of Freeman type}
\label{bgf}

The saving of $(q+P^d|q \balp - \ba|)^{R+1+\eps}$ in \eqref{SstarBound} suffices to obtain an upper bound for 
\[
\int_{\fN^*} S^*(\balp) e(-\balp \cdot \btau) \bK_{\pm}(\balp) \d \balp
\]
of the correct order of magnitude. On Davenport--Heilbronn minor arcs, however, we shall need to save slightly more. Using the methods of Bentkus, G\"otze and Freeman, as exposited in \cite[Lemmas 2.2 and 2.3]{Woo2003}, we will show that $F(\balp) =  o(1)$ in the case that $|\balp|$ is of `intermediate' size, where $F(\balp)$ is as in \eqref{fdef}. The set on which we are able to prove this estimate will define our Davenport--Heilbronn minor arcs. The success of our endeavour depends crucially on the irrationality of $\mu$.

For the argument to work, we need to essentially replace $F$ by a function defined on all of $\bR^R$. For $\balp \in \bR^R$, let $\cF(\balp; P)$ be the supremum of the quantity
\[
(q+P^d |q \balp - \ba|)^{-1} (s + P^{d-1} |s \mu \balp - \bb|)^{-1}
\]
over $q,s \in \bN$ and $\ba, \bb \in \bZ^R$ satisfying $q \le C_\bf s$. It follows from Lemma \ref{identities} and the bound $D \le C_\bf$ that
\begin{equation} \label{Fbound}
F(\balp; P) \le \cF(\balp; P) \qquad (\balp \in \fR_P).
\end{equation}

\begin{lemma} \label{Freeman1}
Let $V$ and $W$ be fixed real numbers such that $0 < V \le W$. Then
\begin{equation} \label{Freeman1eq}
\sup \{ \cF(\balp; P): V \le |\balp| \le W \} \to 0 \qquad (P \to \infty).
\end{equation}
\end{lemma}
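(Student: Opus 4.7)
\medskip

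The plan is to proceed by contradiction using the compactness of the bounded box $\{\balp: V \le |\balp| \le W\}$ and the discreteness of the integer approximating data. Suppose the conclusion fails. Then there is a positive $\eps_0$ and sequences $P_j \to \infty$ and $\balp^{(j)} \in \bR^R$ with $V \le |\balp^{(j)}| \le W$ such that $\cF(\balp^{(j)}; P_j) \ge \eps_0$ for all $j$. By the definition of $\cF$ as a supremum, we may choose for each $j$ positive integers $q_j, s_j$ with $q_j \le C_\bf s_j$ and vectors $\ba^{(j)}, \bb^{(j)} \in \bZ^R$ such that
\[
\bigl(q_j + P_j^d|q_j \balp^{(j)} - \ba^{(j)}|\bigr)\bigl(s_j + P_j^{d-1}|s_j \mu \balp^{(j)} - \bb^{(j)}|\bigr) \le 2/\eps_0.
\]

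Since each factor on the left is at least $1$, each factor is at most $2/\eps_0$. In particular $q_j, s_j \le 2/\eps_0$, and since these are positive integers we may, after passing to a subsequence, assume that $q_j = q$ and $s_j = s$ are constant. Moreover,
\[
|q \balp^{(j)} - \ba^{(j)}| \le 2\eps_0^{-1} P_j^{-d}, \qquad |s \mu \balp^{(j)} - \bb^{(j)}| \le 2\eps_0^{-1} P_j^{-(d-1)},
\]
so $\ba^{(j)}$ and $\bb^{(j)}$ are bounded integer vectors (using $|\balp^{(j)}| \le W$), and after another extraction of a subsequence we may assume $\ba^{(j)} = \ba$ and $\bb^{(j)} = \bb$ are constant too. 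A further extraction makes $\balp^{(j)} \to \balp^\star$ for some $\balp^\star$ with $V \le |\balp^\star| \le W$.

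Letting $j \to \infty$ in the two approximating inequalities above (and using $d \ge 2$), we obtain the two exact identities
\[
q \balp^\star = \ba, \qquad s \mu \balp^\star = \bb.
\]
Since $|\balp^\star| \ge V > 0$, some coordinate $\alp^\star_k$ is nonzero; the first identity then forces $a_k \ne 0$, and combining the two identities componentwise yields
\[
\mu = \frac{q b_k}{s a_k} \in \bQ,
\]
contradicting the irrationality of $\mu$. This contradiction proves \eqref{Freeman1eq}.

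The argument is essentially a compactness exercise, and the only substantive point is that both factors of the product defining $\cF$ contribute positive integer pieces $q, s \ge 1$, which is what permits the simultaneous extraction of constant subsequences for all four integer parameters; the relation $q \le C_\bf s$ is not used directly here, although it is what justifies passing from $F$ to $\cF$ via \eqref{Fbound}. The main conceptual point is the identification of the limit ratio as rational, which is the mechanism by which the irrationality of $\mu$ enters the proof.
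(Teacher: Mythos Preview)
Your proof is correct and follows essentially the same contradiction argument as the paper: extract constant integer data $(q,s,\ba,\bb)$ along a subsequence, then use the two approximation inequalities to force $\mu$ rational. The only cosmetic difference is that you pass to a convergent subsequence $\balp^{(j)}\to\balp^\star$ before taking the limit, whereas the paper observes directly that once $(q,s,\ba,\bb)$ is constant, $\mu q^{-1}a_k - s^{-1}b_k \ll P_m^{1-d}\to 0$ without needing $\balp^{(m)}$ itself to converge; your extra extraction is harmless but unnecessary.
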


\begin{proof} Suppose for a contradiction that \eqref{Freeman1eq} is false. Then there exist $\psi > 0$ and 
\[
(\balp^{(m)}, P_m, q_m, s_m, \ba^{(m)}, \bb^{(m)}) \in \bR^R \times [1, \infty) \times \bN^2 \times (\bZ^R)^2
\quad (m \in \bN) 
\]
such that (i) the sequence $(P_m)$ increases monotonically to infinity, (ii)
\[
V \le |\balp^{(m)}| \le W \qquad (m \in \bN)
\]
and (iii) if $m \in \bN$ then
\begin{equation} \label{Fre1key}
(q_m+P_m^d|q_m \balp^{(m)} - \ba^{(m)}|) \cdot (s_m +P_m^{d-1} |s_m \mu \balp^{(m)} - \bb^{(m)}|) < \psi^{-1}.
\end{equation}
Now $q_m, s_m < \psi^{-1} \ll 1$, so $|\ba^{(m)}|, |\bb^{(m)}| \ll 1$. In particular, there are only finitely many possible choices for the tuple $(q_m, s_m, \ba^{(m)}, \bb^{(m)})$, so this tuple must take a particular value infinitely often, say $(q,s,\ba,\bb)$. Note that $\ba \ne \bzero$, for if $m$ is large then \eqref{Fre1key} and the condition $|\balp^{(m)}| \ge V$ ensure that $\ba^{(m)} \ne \bzero$. 

Let $k \in \{1,2,\ldots,R\}$ be such that $a_k \ne 0$. From \eqref{Fre1key} we have
\[
\alp^{(m)}_k - q_m^{-1} a^{(m)}_k  \ll P_m^{-d}, \qquad \mu \alp^{(m)}_k - s_m^{-1} b^{(m)}_k  \ll P_m^{1-d}.
\]
Hence
\[
\mu q_m^{-1} a^{(m)}_k -  s_m^{-1} b^{(m)}_k \ll P_m^{1-d} \to 0 \qquad (m \to\infty).
\]
We conclude that
\[
\mu = \frac{qb_k}{sa_k},
\]
contradicting the irrationality of $\mu$. This contradiction establishes \eqref{Freeman1eq}.
\end{proof}

\begin{cor} \label{Freeman2}
There exists $T: [1,\infty) \to [1,\infty)$, increasing monotonically to infinity, such that
\begin{equation} \label{Tnote}
T(P) \le P^\del
\end{equation}
and, for large $P$,
\begin{equation} \label{FreemanBound}
\sup \{ F(\balp; P) : \balp \in \fN^*_P, \: P^{\del-d} \le |\balp| \le T(P) \} \le T(P)^{-1}.
\end{equation}
\end{cor}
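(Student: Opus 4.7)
The plan is to construct $T$ as a step function that increases to infinity slowly enough that Lemma \ref{Freeman1} applies on the annulus $\{P^{\del-d} \le |\balp| \le T(P)\}$. The key input is the combination of Lemma \ref{Freeman1} with the inclusion $\fN^* \subseteq \fR$ and the inequality $F(\balp;P) \le \cF(\balp;P)$ from \eqref{Fbound}, which transfers the vanishing of $\cF$ on annuli to the desired bound for $F$ on $\fN^*$.

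Concretely, I would proceed as follows. For each $k \in \bN$, Lemma \ref{Freeman1} applied with $V = 2^{-k}$ and $W = 2^k$ yields a threshold $P_k^*$ such that
\[
\sup\{\cF(\balp;P) : 2^{-k} \le |\balp| \le 2^k\} \le 1/k
\qquad (P \ge P_k^*).
\]
By enlarging $P_k^*$ if necessary, I may assume that the sequence $(P_k^*)$ is strictly increasing with $P_k^* \to \infty$, and that
\[
P_k^* \ge \max\bigl(2^{k/\del},\: 2^{k/(d-\del)}\bigr) \qquad (k \in \bN).
\]
Now define $T: [1,\infty) \to [1,\infty)$ by setting $T(P) = 1$ for $1 \le P < P_1^*$ and $T(P) = k$ for $P_k^* \le P < P_{k+1}^*$. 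Then $T$ is non-decreasing and tends to infinity as $P \to \infty$, as required.

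It remains to verify the two displayed conditions. For the upper bound \eqref{Tnote}, if $P_k^* \le P < P_{k+1}^*$ then $T(P) = k \le 2^k \le (P_k^*)^\del \le P^\del$. For \eqref{FreemanBound}, suppose $P$ is large so that $P \ge P_k^*$ for some $k \ge 1$, and let $\balp \in \fN^*(P)$ satisfy $P^{\del - d} \le |\balp| \le T(P) = k$. Then $|\balp| \le k \le 2^k$, and moreover
\[
|\balp| \ge P^{\del - d} \ge (P_k^*)^{\del - d} \ge 2^{-k},
\]
so $\balp$ lies in the annulus on which the supremum of $\cF(\cdot;P)$ is at most $1/k = T(P)^{-1}$. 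Since $\fN^* \subseteq \fR$ by \eqref{subset}, the bound \eqref{Fbound} gives $F(\balp;P) \le \cF(\balp;P) \le T(P)^{-1}$, as needed.

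The main obstacle here is not conceptual but bookkeeping: one must choose $P_k^*$ large enough to simultaneously (i) activate the conclusion of Lemma \ref{Freeman1} on the annulus $[2^{-k}, 2^k]$, (ii) ensure that $P^{\del-d} \le 2^{-k}$ so the lower endpoint of the annulus $\{|\balp|\ge P^{\del-d}\}$ lies below $2^{-k}$, and (iii) ensure that $T(P) = k$ does not exceed $P^\del$. All three constraints are of the form $P_k^* \gg 2^{ck}$ for some constant $c$, so they can be met simultaneously, and the resulting $T$ has all the advertised properties.
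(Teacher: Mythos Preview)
Your proof has a genuine gap in the verification of \eqref{FreemanBound}. You write
\[
|\balp| \ge P^{\del-d} \ge (P_k^*)^{\del-d} \ge 2^{-k},
\]
but both inequalities after the first are false. Since $\del$ is small and $d \ge 2$, the exponent $\del - d$ is negative, so $P \ge P_k^*$ gives $P^{\del-d} \le (P_k^*)^{\del-d}$, not $\ge$. Moreover, your hypothesis $P_k^* \ge 2^{k/(d-\del)}$ yields $(P_k^*)^{\del-d} \le 2^{-k}$, again the wrong direction. The underlying problem is structural: on the interval $P \in [P_k^*, P_{k+1}^*)$ the inner radius $P^{\del-d}$ of the region you must cover shrinks toward $0$ as $P$ grows, while your chosen annulus $[2^{-k},2^k]$ has a \emph{fixed} inner radius. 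No lower bound on $P_k^*$ can force $P^{\del-d} \ge 2^{-k}$ for all $P$ in that interval.

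The paper handles this by first proving the bound only on the smaller annulus $T(P)^{-1} \le |\balp| \le T(P)$ (via Lemma \ref{Freeman1} with $V=1/m$, $W=m$), and then treating the remaining range $P^{\del-d} \le |\balp| < T(P)^{-1}$ by a separate direct argument: assuming $|\balp| < T(P)^{-1}$ and $\cF(\balp;P) > T(P)^{-1}$, one unpacks the definition of $\cF$, uses the bound $q \le C_\bf s$ together with \eqref{Tnote} and the triangle inequality to force $\ba = \bzero$ and $\bb = \bzero$, and concludes that $|\balp| < P^{\del-d}$. This extra step is not mere bookkeeping; it is where the structure of $\cF$ (in particular the coupling $q \le C_\bf s$) is actually used, and your proposal omits it.
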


\begin{proof} 
Recall \eqref{subset} and \eqref{Fbound}. We shall prove, \emph{a fortiori}, that
\[
\sup \{ \cF(\balp; P) : P^{\del - d} \le |\balp| \le T(P) \} \le T(P)^{-1}.
\]
Lemma \ref{Freeman1} yields a sequence $(P_m)$ of positive real numbers such that if
\[
1/m \le |\balp| \le m
\]
then $\cF(\balp; P_m) \le 1/m$. We may choose this sequence to be increasing, and such that if $m \in \bN$ then $P_m^\del \ge m$. We define $T$ by $T(P) = 1$  ($1 \le P < P_1$) and $T(P) = m$ ($P_m \le P < P_{m+1}$). We note \eqref{Tnote}, and that $T$ increases monotonically to infinity. Now
\[
\sup \{ \cF(\balp; P): T(P)^{-1} \le |\balp| \le T(P) \} \le T(P)^{-1},
\]
for if $P \ge P_m$ then $\cF(\balp; P) \le \cF(\balp; P_m)$.

It remains to show that if $P$ is large and
\begin{equation} \label{Tass}
|\balp| < T(P)^{-1} < \cF(\balp; P)
\end{equation}
then $|\balp| < P^{\del - d}$. Suppose $P$ is large and $\balp \in \bR^R$ satisfies \eqref{Tass}. Then
\begin{equation} \label{Tass2}
(q+P^d |q \balp - \ba|) \cdot (s+P^{d-1} |s \mu \balp  - \bb|) < T(P)
\end{equation}
for some $q,s \in \bN$ and some $\ba, \bb \in \bZ^R$ satisfying $q \le C_\bf s$. We must therefore have $q + P^d|q\balp - \ba| < T(P)^{1/2}$ or $s + P^{d-1}|s \mu \balp - \bb| < T(P)^{1/2}$.

\textbf{Case: $q + P^d|q\balp - \ba| < T(P)^{1/2}$.}
Now $q < T(P)^{1/2}$ and
\[
|q\balp - \ba| < P^{-d} T(P)^{1/2}.
\]
Combining these with \eqref{Tnote}, \eqref{Tass} and the triangle inequality yields 
\[
|\ba| < T(P)^{-1/2} + P^{-d} T(P)^{1/2} \to 0 \qquad (P \to \infty).
\]
Hence $\ba = \bzero$, so
\[
|\balp| < P^{-d} T(P)^{1/2} \le P^{\del-d},
\]
as desired.

\textbf{Case: $s + P^{d-1}|s \mu \balp - \bb| < T(P)^{1/2}$.} In this case $s < T(P)^{1/2}$ and 
\[
|s \mu \balp - \bb| < P^{1-d}T(P)^{1/2}.
\]
By \eqref{Tnote}, \eqref{Tass} and the triangle inequality, we now have
\[ 
|\bb| \ll T(P)^{-1/2} + P^{1-d}T(P)^{1/2} \to 0 \qquad (P \to \infty),
\]
so $\bb = \bzero$. Thus
\[
|q\balp| \le C_\bf s |\balp| \ll |s\mu \balp| \ll P^{1-d}T(P)^{1/2}.
\]
Combining this with \eqref{Tnote}, \eqref{Tass2} and the triangle inequality yields
\[
|\ba| \ll P^{1-d}T(P)^{1/2} + P^{-d}T(P) \to 0 \qquad (P \to \infty),
\]
so $\ba= \bzero$. Substituting this into \eqref{Tass2} and using \eqref{Tnote} gives 
\[
|\balp| \le |q\balp| < P^{-d}T(P) \le P^{\del -d},
\]
completing the proof.
\end{proof}

\section{The Davenport--Heilbronn method}
\label{dh}

In this section we finish the proof of the asymptotic formula \eqref{asymp}. Recall that it remains to prove \eqref{goal2}. By \eqref{IntCompare}, it now suffices to show that
\begin{equation} \label{goal3}
\int_{\fN^*} S^*(\balp) e(-\balp \cdot \btau) \bK_{\pm}(\balp) \d \balp = (2\eta)^R c P^{n-Rd} + o(P^{n-Rd})
\end{equation}
as $P \to \infty$, where $c$ is given by \eqref{cdef}. With $T(P)$ as in Corollary \ref{Freeman2}, we define our Davenport--Heilbronn major arc by
\[
\fM_1 = \{ \balp \in \bR^R: |\balp| < P^{\del-d} \},
\]
our minor arcs by
\[
\fm = \{ \balp \in \bR^R:  P^{\del-d} \le |\balp| \le T(P) \}
\]
and our trivial arcs by
\[
\ft = \{ \balp \in \bR^R:  |\balp| > T(P) \}.
\]

Recall that to any $\balp \in \fN^*$ we have uniquely assigned $q \in \bN$ and $\ba \in \bZ^R$ via \eqref{qbound} and \eqref{qerror}. For any unit hypercube $\fU$ in $R$ dimensions, we have
\[
\int_{\fN^* \cap \fU} P^n (q+P^d|q\balp - \ba|)^{-R-1-\eps} \d \balp \ll P^n X_1 Y_1,
\]
where
\[
X_1 = \sum_{q \in \bN} q^{-1-\eps} \ll 1
\]
and
\[
Y_1 = \int_{\bR^R} (1+P^d|\bbet|)^{- R - 1} \d \bbet 
\le \Bigl( \int_\bR (1+P^d |\bet|)^{-1-1/R} \d \bet \Bigr)^R \ll P^{-Rd}.
\]
Hence 
\begin{equation} \label{crux} 
\int_{\fN^* \cap \fU} P^n (q+P^d|q\balp - \ba|)^{-R-1-\eps} \d \balp \ll P^{n-Rd}.
\end{equation}
Combining this with \eqref{SstarBound} and \eqref{FreemanBound} gives
\[
\int_{\fN^* \cap \fm \cap \fU} |S^*(\balp)| \d \balp
\ll \sup_{\balp \in \fN^* \cap \fm} F(\balp)^\eps \cdot P^{n-Rd}  
\ll T(P)^{-\eps} P^{n-Rd}.
\]
In view of \eqref{Ldef}, \eqref{Kbounds} and \eqref{Kprod}, we now have
\begin{equation} \label{minor}
\int_{\fN^* \cap \fm} |S^*(\balp)  \bK_\pm(\balp)| \d \balp \ll L(P)^R T(P)^{-\eps} P^{n-Rd} = o(P^{n-Rd}).
\end{equation}
Note that 
\begin{equation} \label{Fnote}
0 < F(\balp) \le 1.
\end{equation} 
Together with \eqref{Ldef}, \eqref{Kbounds}, \eqref{Kprod}, \eqref{SstarBound} and \eqref{crux}, this gives
\begin{align} \notag
\int_{\fN^* \cap \ft} |S^*(\balp)  \bK_\pm(\balp)| \d \balp &\ll 
   P^{n-Rd} L(P)^R \sum_{n=0}^\infty (T(P)+n)^{-2}  \\
\label{trivial} &\ll L(P)^R T(P)^{-1} P^{n-Rd} = o(P^{n-Rd}).
\end{align}

Recalling \eqref{NstarDef}, we claim that
\begin{equation} \label{claim}
\fN^* \cap \fM_1  = \{ \balp \in \bR^R: 2|\balp| \le P^{R(d-1)\tet_0 - d}, \: |S(\balp)| > P^{n - R(R+1)d\tet_0} \}.
\end{equation}
It is clear from \eqref{delDef} that if $2|\balp| \le P^{R(d-1)\tet_0 - d}$ and $|S(\balp)| > P^{n - R(R+1)d\tet_0}$ then $\balp \in \fN^* \cap \fM_1$. Conversely, let $\balp \in \fN^* \cap \fM_1$. Then $|S(\balp)| > P^{n - R(R+1)d\tet_0}$. Further, as $\balp \in \fN$ we have $2|\balp - q^{-1} \ba| \le P^{R(d-1)\tet_0 - d}$ for some $q \in \bN$ and $\ba \in \bZ^R$ satisfying $q \le P^{R(d-1)\tet_0}$. Since $\balp \in \fM_1$, the triangle inequality now gives
\[
|q^{-1}\ba| < P^{\del - d} + P^{R(d-1)\tet_0-d} < q^{-1},
\]
so $\ba = \bzero$. Hence $2|\balp| \le P^{R(d-1)\tet_0 - d}$, and we have verified \eqref{claim}.

Put 
\[
\fM = \{ \balp \in \bR^R: 2|\balp| \le P^{R(d-1)\tet_0 - d} \}
\]
and 
\[
\fM_2 = \{ \balp \in \bR^R: 2|\balp| \le P^{R(d-1)\tet_0 - d}, \: |S(\balp)| \le P^{n - R(R+1) d\tet_0} \}.
\]
From \eqref{claim}, we see that $\fM$ is the disjoint union of $\fM_2$ and $\fN^* \cap \fM_1$. 

\begin{lemma}
We have
\begin{equation} \label{MajorSubset}
\fM \subseteq \fR(1,1,1,1,\bzero,\bzero,\bzero,\bzero) \subseteq \fR. 
\end{equation}
\end{lemma}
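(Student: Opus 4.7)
The second inclusion $\fR(1,1,1,1,\bzero,\bzero,\bzero,\bzero) \subseteq \fR$ holds by definition, since $\fR$ is defined as the union over all admissible tuples. So the real content is the first inclusion, and my plan is simply to verify each of the defining inequalities \eqref{bak1cor}--\eqref{further} in turn for the specialised tuple $(1,1,1,1,\bzero,\bzero,\bzero,\bzero)$, on the set $\fM$. Several conditions are immediate: \eqref{bak1cor}, \eqref{qbound}, \eqref{DE} (using $C_\bf > 1$) and \eqref{further} all reduce to trivialities about $1$ and $\bzero$. Condition \eqref{qerror} is literally the definition of $\fM$. This leaves \eqref{bak2cor}, \eqref{Dr} and \eqref{Er} as the substantive checks.

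For \eqref{bak2cor} with $r=1$ and $\ba_\dag = \bzero$, I need $|\omega_\bj| < P^{\del - |\bj|_1}$ for each $\bj$ with $1 \le |\bj|_1 \le d$. From \eqref{omegadef} and $\balp \in \fM$, I get $|\omega_\bj| \ll |\balp| \ll P^{R(d-1)\tet_0 - d}$, and since \eqref{delDef} gives $\del \gg \tet_0$ with constants making $\del$ strictly larger than $R(d-1)\tet_0$, the inequality holds for all $|\bj|_1 \le d$ once $P$ is large.

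The remaining conditions \eqref{Dr} and \eqref{Er}, after the specialisation, read $|\balp| \ll \max_{|\bj|_1 = d} |\omega_\bj|$ and $|\mu \balp| \ll \max_{|\bj|_1 = d-1} |\omega_\bj|$. These are \emph{reverse} inequalities to what \eqref{omegadef} naturally provides, and this is the only place where real work is needed. I would deduce them from the linear algebra of the coefficient matrices: writing $\Omega_j = C_j Y_j$ as in the proof of Lemma \ref{SpecLemma}, the matrix $C_j$ has the $R$ columns of $(d_{k,\bj})$ linearly independent precisely when $j \in \cS$. Since $d \in \cS$ (as $F_{k,d} = f_k$ and the $f_k$ are linearly independent, a consequence of the geometric hypothesis on rank), and $d-1 \in \cS$ (as $F_{k,d-1} = (1,\ldots,1)\cdot \nabla f_k$ by the identification in the introduction, and these are linearly independent by hypothesis), one can select an $R \times R$ invertible submatrix in each case and invert it to express $\balp$ (respectively $\mu\balp$) as a bounded linear combination of the $\omega_\bj$ with $|\bj|_1 = d$ (respectively $d-1$). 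The implicit constants depend only on $\bf$ and $\mu$, both of which are constants for us.

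The main obstacle, if any, is purely bookkeeping: making sure the $\ll$ constants in \eqref{Dr} and \eqref{Er} are truly absolute (dependent only on $\bf$ and $\mu$) so that the specialised tuple genuinely lies in $\fR$ rather than merely approximately satisfying its defining inequalities. Since the conditions \eqref{Dr} and \eqref{Er} are stated with Vinogradov notation and $\bf,\mu$ are regarded as constants, this is automatic. No contradiction with the uniqueness built into the definition of $\fR$ arises because we are only claiming membership in one particular set of the union, not disjointness with others.
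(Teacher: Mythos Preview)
Your proposal is correct and follows essentially the same approach as the paper: verify the trivial conditions first, handle \eqref{bak2cor} via \eqref{omegadef} and the comparison $R(d-1)\tet_0 < \del$ from \eqref{delDef}, and then deduce the reverse inequalities \eqref{Dr} and \eqref{Er} from the fact that $d, d-1 \in \cS$ by inverting an $R\times R$ submatrix of the coefficient matrix $C_j$. The paper packages this last step as an application of Lemma~\ref{SpecLemma} (with $r=1$, $a_\bj=0$) followed by an argument that the resulting integers $\ba'_1,\ba'_2$ vanish; your direct inversion of $\Omega_j = C_j Y_j$ is a slight streamlining of the same linear algebra, since with $a_\bj=0$ the integer part in Lemma~\ref{SpecLemma} is automatically zero.
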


\begin{proof} Let $\balp \in \fM$, and recall \eqref{suppress}. With $\cX = (1,1,1,1,\bzero,\bzero,\bzero,\bzero)$, the conditions \eqref{bak1cor}, \eqref{qbound}, \eqref{qerror}, \eqref{DE}, and \eqref{further} are plainly met, while the bound \eqref{bak2cor} follows from \eqref{omegadef} and \eqref{delDef}. It therefore remains to show that
\begin{equation} \label{finalmajor0}
|\balp| \ll \max_{|\bj|_1 = j} |\om_\bj| \qquad (j=d,d-1).
\end{equation}
Recall that $d, d-1 \in \cS$. Lemma \ref{SpecLemma} reveals that there exist nonzero integers $D'$ and $E'$, bounded in terms of $\bf$, as well as $\ba'_1, \ba'_2 \in \bZ^R$, satisfying
\begin{equation} \label{finalmajor1}
|D' \balp - \ba'_1| \ll \max_{|\bj|_1 = d} |\om_\bj|, \qquad
|E' \mu \balp - \ba'_2| \ll \max_{|\bj|_1 = d-1} |\om_\bj|.
\end{equation}
By \eqref{bak2cor}, we now have
\[
|D' \balp - \ba'_1|, |E' \mu \balp - \ba'_2| \ll P^{\del - 1}.
\]
Since $\balp \in \fM$, the triangle inequality now gives $|\ba'_1|, |\ba'_2| < 1$, so $\ba'_1 = \ba'_2 = \bzero$. Substituting this information into \eqref{finalmajor1} confirms \eqref{finalmajor0}.
\end{proof}

Now \eqref{Kbounds}, \eqref{Kprod} and \eqref{SstarCompare} yield
\[
\int_{\fM_2} |S^*(\balp)\bK_{\pm}(\balp)| \d \balp \ll P^{R^2(d-1)\tet_0 - Rd}P^{n- R(R+1)d \tet_0} = o(P^{n- Rd}),
\]
so
\begin{align} \notag
\int_{\fN^* \cap \fM_1} S^*(\balp) e(-\balp \cdot \btau) \bK_\pm(\balp) \d \balp 
  &= \int_{\fM} S^*(\balp) e(-\balp \cdot \btau) \bK_\pm(\balp) \d \balp \\
\label{MajorCompare} & \qquad + o(P^{n-Rd}).
\end{align}
By \eqref{Kdef}, we have
\[
K_{\pm}(\alp) = (2 \eta \pm \rho) \cdot \sinc(\pi \alp \rho) \cdot \sinc(\pi \alp(2\eta \pm \rho))
\]
for $\alp \in \bR$. Now \eqref{Tbound}, \eqref{Ldef} and the Taylor expansion of $\sinc(\cdot)$ yield
\[
K_{\pm}(\alp) = 2 \eta + O(L(P)^{-1}) \qquad (|\alp| < P^{-1}).
\]
Substituting this into \eqref{Kprod} gives
\begin{equation} \label{SincTaylor}
\bK_\pm(\balp) = (2\eta)^R + O(L(P)^{-1}) \qquad (\balp \in \fM).
\end{equation}
By \eqref{SstarBound}, \eqref{Fnote} and \eqref{MajorSubset}, we also have
\begin{equation} \label{UpperBound}
\int_\fM |S^*(\balp)| \d \balp \ll P^n \int_{\bR^R} (1+ P^d |\balp|)^{-R-1} \d \balp \ll P^{n-Rd}.
\end{equation}
From \eqref{SincTaylor} and \eqref{UpperBound}, we infer that
\[
\int_\fM S^*(\balp) e(-\balp \cdot \btau) \bK_\pm(\balp) \d \balp
= (2\eta)^R \int_{\fM} S^*(\balp) e(-\balp \cdot \btau)  \d \balp + o(P^{n-Rd}).
\]
Combining this with \eqref{minor}, \eqref{trivial} and \eqref{MajorCompare} yields
\begin{align} \notag
\int_{\fN^*} S^*(\balp) e(-\balp \cdot \btau) \bK_\pm(\balp) \d \balp &= (2\eta)^R \int_\fM S^*(\balp) e(-\balp \cdot \btau)  \d \balp \\
 \label{RemovedK} & \qquad + o(P^{n-Rd}).
\end{align}

Let $\balp \in \fM$. Recall \eqref{Idef} and \eqref{SraDef}. By \eqref{SstarDef} and \eqref{MajorSubset}, we have
\[ 
S^*(\balp) = P^n  e(\balp \cdot \bf( \bmu))
\int_{[-1,1]^n} e\Bigl (\bgam \cdot \bf(\bt) + \sum_{1 \le |\bj|_1 \le d-1} \gam_\bj \bt^\bj \Bigr) \d \bt,
\]
with \eqref{omegadef} and \eqref{WeirdGam}. Using \eqref{WeirdGam} and the change of variables $\by = P \bt$ gives
\[
S^*(\balp) = \int_{[-P,P]^n} e \Bigl( \balp \cdot \bf(\by) + \balp \cdot \bf(\bmu)
+ \sum_{1 \le |\bj|_1 \le d-1} \omega_\bj \by^\bj \Bigr) \d \by.
\]
By \eqref{Taylor} and \eqref{omegadef}, we now have
\[
S^*(\balp) = \int_{[-P,P]^n} e(\balp \cdot \bf(\by + \bmu)) \d \by 
= S_1(\balp) + O(P^{n-1}),
\]
where
\[
S_1(\balp) = \int_{[-P,P]^n} e(\balp \cdot \bf(\bx)) \d \bx.
\]
Hence
\begin{align} \notag
\int_\fM S^*(\balp) e(-\balp \cdot \btau) \d \balp - \int_\fM S_1(\balp) e(-\balp \cdot \btau) \d \balp &\ll P^{n-1 + R^2(d-1)\tet_0 - Rd} \\
\label{S1compare} &= o(P^{n-Rd}).
\end{align}

Note that $S_1(\balp) = P^nI(P^d \balp, \bzero)$. In light of \eqref{kapBound}, the bound \eqref{Ibound} now yields
\[
S_1(\balp) \ll P^n (1+P^d|\balp|)^{- R - 1} \ll P^n \prod_{k \le R} (1+P^d |\alp_k|)^{-1-1/R},
\]
so
\[
\int_{\bR^R \setminus \fM} |S_1(\balp)| \d \balp \ll P^{n - (R-1)d} \int_{P^{R\eps-d}}^\infty (1+P^d \alp)^{-1-1/R} \d \alp \ll P^{n- Rd-\eps}.
\]
In particular
\begin{equation} \label{S1rest}
\int_\fM S_1(\balp) e(-\balp \cdot \btau) \d \balp = \int_{\bR^R} S_1(\balp) e(-\balp \cdot \btau) \d \balp + o(P^{n-Rd}).
\end{equation}

To apply \cite[Lemma 5.3]{Bir1962} directly, we need to work with a box of side length less than 1. Changing variables with $\bx = 3P \bu$ and $\bz = (3P)^d \balp$ shows that
\begin{align*}
\int_{\bR^R} S_1(\balp) e(- \balp \cdot \btau) \d \balp &=  \int_{\bR^R} \int_{[-P,P]^n} e(\balp \cdot \bf(\bx)) e(-\balp \cdot \btau) \d \bx  \d \balp \\
&= (3P)^{n - Rd} \int_{\bR^R} \cI(\bz)  e( - (3P)^{-d} \btau \cdot \bz)\d \bz,
\end{align*}
where 
\[
\cI(\bz) = \int_{[-1/3,1/3]^n} e(\bz \cdot \bf(\bu)) \d \bu.
\]
Now \cite[Lemma 5.3]{Bir1962} gives 
\[
\int_{\bR^R} S_1(\balp) e(- \balp \cdot \btau) \d \balp = (3P)^{n - Rd} \Bigl( \int_{\bR^R} \cI(\bz) \d \bz + o(1) \Bigr)
\]
as $P \to \infty$. Moreover, changing variables yields
\[
\int_{\bR^R} \cI(\bz) \d \bz = \int_{\bR^R} \int_{[-1/3,1/3]^n} e(\bz \cdot \bf(\bu)) \d \bu \d \bz = 3^{Rd-n} c,
\]
where we recall \eqref{cdef}. Hence
\begin{equation} \label{S1eval}
\int_{\bR^R} S_1(\balp) e(- \balp \cdot \btau) \d \balp = cP^{n-Rd} + o(P^{n-Rd}).
\end{equation}

Combining \eqref{S1compare}, \eqref{S1rest} and \eqref{S1eval} gives
\[
\int_\fM S^*(\balp) e(-\balp \cdot \btau) \d \balp = cP^{n-Rd} + o(P^{n-Rd}).
\]
Substituting this into \eqref{RemovedK} yields \eqref{goal3}, confirming the desired asymptotic formula \eqref{asymp}.

\section{The singular integral} 
\label{SingularIntegral}

Schmidt \cite[\S 3]{Sch1985} gives the following geometric definition of the real density $c$. For $L > 0$ and $\xi \in \bR$, let
\[
\lam_L(\xi) = L \cdot \max(0,1 - L|\xi|).
\]
For $\bxi \in \bR^R$, put
\[
\Lam_L(\bxi) = \prod_{k \le R} \lam_L(\xi_k).
\]
Set
\[
I_L(\bf) = \int_{[-1,1]^n} \Lam_L(\bf(\bt)) \d \bt,
\]
and define
\begin{equation} \label{cSchmidt}
c = \lim_{L \to \infty} I_L(\bf)
\end{equation}
whenever the limit exists. Schmidt explains in \cite[\S 11]{Sch1982} and \cite[\S 3]{Sch1985} that the limit does exist, and that this definition is equivalent to Birch's analytic definition \eqref{cdef}.

The expression on the right hand side of \eqref{cdef} arose naturally in our proof of \eqref{asymp}. It is well defined, by \cite[Lemma 5.3]{Bir1962} and a change of variables (here Birch uses a box of side length less than 1). One can verify the final statement of Theorem \ref{MainThm} from \eqref{cSchmidt} by mimicking \cite[\S 4]{Sch1982}; one uses the implicit function theorem to construct a region of measure $\gg L^{-R}$ on which $|\bf(\bt)| < (2L)^{-1}$. Birch instead invokes the Fourier integral theorem to show from \eqref{cdef} that $c > 0$ whenever $\bf = \bzero$ has a nonsingular real solution (see \cite[\S6]{Bir1962}). 

This discussion concludes the proof of Theorem \ref{MainThm}.

\section{An alternative approach}
\label{SchmidtApproach}

In this section we establish Theorem \ref{TheoremTwo}. The crux is a suitable analogue of Lemma \ref{Birch43}, and we shall deduce such an analogue from the work of Schmidt \cite{Sch1985}. Let $g$ be as defined in \cite[\S 10]{Sch1985}, and put
\begin{equation*}
\kap' = \frac g{R(d-1)2^{d-1}}.
\end{equation*}
The quantity $\kap'$ shall play the r\^ole played by $\kap$ in the proof of Theorem \ref{MainThm}. We note at once that coupling \eqref{hhyp} with the corollary to \cite[Proposition III]{Sch1985} yields
\[
\kap' > R+1,
\]
in analogy with \eqref{kapBound}.

We begin with an analogue of \cite[Lemma 2.5]{Bir1962}.

\begin{lemma} \label{Bir25}
Let $0 < \tet \le 1$ and $k > 0$. Then at least one of the following holds.
\begin{enumerate}[(i)]
\item We have
\[
g(\balp, \bom_\diam) \ll P^{n-k}.
\]
\item There exist integers $q, a_1, \ldots, a_R$ satisfying \eqref{qa} and \eqref{FirstApprox}.
\item We have
\[
g \le 2^{d-1}k/\tet.
\]
\end{enumerate}
The same is true if we replace $g(\balp, \bome_\diam)$ by
\[
\sum_{1 \le x_1, \ldots, x_n \le P}  e\Bigl( \balp \cdot \bf(\bx) + \sum_{1 \le |\bj|_1 \le d-1} \omega_\bj \bx^\bj \Bigr).
\]
\end{lemma}

\begin{proof} We may imitate the Birch's proof of \cite[Lemma 2.5]{Bir1962}. As in \S \ref{BirchType}, the lower order terms have no bearing on the proof. Our second assertion follows in the same way as our first.
\end{proof}

This implies the following analogue of Lemma \ref{Birch43}.

\begin{lemma} \label{Birch43alt}
Let $0 < \tet \le 1$. Suppose 
\begin{equation} \label{gbigalt}
|g(\balp, \bome_\diam)| > P^{n- R(d-1)\kap' \tet + \eps}.
\end{equation}
Then there exist integers $q, a_1, \ldots, a_R$ satisfying \eqref{qa} and \eqref{FirstApprox}. In particular, if $|S(\balp)| >  P^{n- R(d-1)\kap' \tet + \eps}$ then there exist $q \in \bN$ and $\ba \in \bZ^R$ satisfying \eqref{qa} and \eqref{FirstApprox}. We may replace $g(\balp, \bome_\diam)$ by
\[
\sum_{1 \le x_1, \ldots, x_n \le P}  e\Bigl( \balp \cdot \bf(\bx) + \sum_{1 \le |\bj|_1 \le d-1} \omega_\bj \bx^\bj \Bigr),
\]
and the same conclusions hold. 
\end{lemma}

\begin{proof}
Choosing $k = R(d-1) \kap' \tet - \eps$ in Lemma \ref{Bir25} ensures that (iii) is impossible, reducing us to two possibilities. We have removed the implied constant from \eqref{gbigalt} by redefining $\eps$ and recalling that $P$ is large. Our second claim follows from our first, by \eqref{Sg}.
\end{proof}

Using Lemma \ref{Birch43alt} instead of Lemma \ref{Birch43}, we can then follow the proof of Theorem \ref{MainThm}, with minimal changes. Corollary \ref{Birch43cor} follows with $\kap'$ in place of $\kap$. Similarly, Lemmas \ref{IboundLemma} and \ref{Sra} follow in the same way, but with $\kap'$ in place of $\kap$. Finally, it is important to note that we still have $d, d-1 \in \cS$. As explained in the introduction, our assumption that the $(1,\ldots,1) \cdot \nabla f_k$ are linearly independent implies that $d-1 \in \cS$. This assumption also implies that $d \in \cS$, in view of \eqref{top}. This completes the proof of Theorem \ref{TheoremTwo}.

The quantity $\Phi(d)$ dominates the quantity $\varphi(d)$ in \cite[Proposition III$_C$]{Sch1985}. If we read \cite{Sch1985} more closely, we find that we can replace $\Phi(d)$ by
\[
\max(\eta_{d-2}, 2^{d-2} - 1),
\]
where $\eta_0 = 1$ and
\[
\eta_m = \sum_{q=1}^m \sum_{\substack{u_1 + \ldots + u_m = q \\ u_i > 0}} \frac {m!} {u_1 ! \cdots u_q!}
\qquad (m \in \bN).
\]

\bibliographystyle{amsbracket}
\providecommand{\bysame}{\leavevmode\hbox to3em{\hrulefill}\thinspace}

\end{document}